\newtheorem{theorem}{Theorem}
\newtheorem{lemma}{Lemma}
\newtheorem{corollary}{Corollary}
\newtheorem{remark}{Remark}
\newtheorem{example}{Example}
\DeclareMathOperator{\cone}{cone}
\DeclareMathOperator{\inte}{int}
\DeclareMathOperator{\bdr}{bdr}
\newcommand{\lng}{\langle}
\newcommand{\rng}{\rangle}
\newcommand{\lf}{\left}
\newcommand{\rg}{\right}
\newcommand{\sa}{\sqcap}
\newcommand{\su}{\sqcup}
\newcommand{\R}{\mathbb R}
\newcommand{\N}{\mathbb N}
\newenvironment{proof}{{\noindent\bf Proof.}}{\hfill$\Box$\\}
\begin{document}

\title{Lattice-like operations and isotone projection sets\thanks{{\it 1991 A M S Subject Classification:} Primary 90C33,
Secondary 15A48; {\it Key words and phrases:} convex sublattices, isotone projections. }}
\author{A. B. N\'emeth\\Faculty of Mathematics and Computer Science\\Babe\c s Bolyai University, Str. Kog\u alniceanu nr. 1-3\\RO-400084 Cluj-Napoca, Romania\\email: nemab@math.ubbcluj.ro \and S. Z. N\'emeth\\School of Mathematics, The University of Birmingham\\The Watson Building, Edgbaston\\Birmingham B15 2TT, United Kingdom\\email: nemeths@for.mat.bham.ac.uk}
\date{}
\maketitle

\begin{abstract}

By using some lattice-like operations which constitute extensions of ones
introduced by
M. S. Gowda, R. Sznajder and J. Tao for self-dual cones, a new perspective
is  gained on the subject of isotonicity of the metric
projection onto the closed convex sets. The results of this paper are 
wide range generalizations of some results of the authors obtained for
self-dual cones. The aim of the subsequent investigations is
to put into evidence some closed convex sets for which the
metric projection is isotonic with respect the order relation
which give rise to the above mentioned lattice-like operations.
The topic is related to 
variational inequalities where the isotonicity of the metric projection is 
an important technical tool. For Euclidean sublattices this approach was considered 
by G. Isac and respectively by H. Nishimura and E. A. Ok.

\end{abstract}

\section{Introduction}

The idea of solving operatorial equations via iterative methods based on ordering goes back to 
the beginnings of the ordered vector space theory (see e. g. \cite{Krasnoselskii1962}).
If an operator has some ``good properties'' with respect to the ordering
of the space, then these can be exploited to derive iterative
processes for solving equations which involve such an operator.
The most important role in this context have
monotone or isotone operators, i. e., the operators
maintaining the order relation.
Iterative methods are widely used for 
solving various types of equilibrium problems (such as variational inequalities, 
complementarity problems etc.)
Recently the isotonicity gained more and more ground
for handling such problems (see \cite{NishimuraOk2012}, \cite{CarlHeikilla2011} and the 
large number of references in \cite{CarlHeikilla2011} related to ordered vector spaces). 

The pioneers of this approach for 
complementarity problems are G. Isac and A. B. N\'emeth. The metric projection onto a closed 
convex cone is the basis of many investigations about the solvability and/or the approximation
of solutions of nonlinear complementarity problems associated with the cone. However, the idea
of relating the ordering to the metric projection onto the cone is initiated in the paper
\cite{IsacNemeth1986}, where isotone projection cones (i.e., generating pointed closed convex 
cones admitting an isotone projection onto themselves) are characterised.
%

The papers \cite{IsacNemeth1990b}, \cite{IsacNemeth2008c},
\cite{Nemeth2009a} consider the problem of solving nonlinear complementarity problems defined 
on isotone projection cones. The solution methods require repeated projection onto the 
underlying cone. It turned out later that this procedure is efficient for isotone projection 
cones \cite{NemethNemeth2009}.


A similar approach can be considered for variational inequalities if the projection onto 
the closed convex set associated with the variational inequality is monotone with respect to 
an appropriate order relation. One of the simplest such order relation, the coordinatewise 
ordering, was considered by G. Isac \cite{Isac1996}, who proved that the sublatticiality of a 
closed convex set implies the isotonicity of the projection onto it.

Recently H. Nishimura and E. A. Ok followed the footsteps of G. Isac
and showed that the sublatticiality is also necessary for the the projection to be
isotone \cite{NishimuraOk2012}. They used the derived equivalence for several applications
concerning variational inequalities defined on closed convex sublattices and other related
equilibrium problems. Thus, the question of characterizing the closed convex 
sublattices with nonempty interior of the coordinatewise ordered Euclidean space which 
admit an isotone projection onto themselves with respect to this order arises very naturally.
A partial answer to this question can be found in the early papers of D. M. Topkis 
\cite{Topkis1976} and A. F. Veinott Jr. \cite{Veinott1981} and a complete one in the paper of 
M. Queyranne and  F. Tardella \cite{QueyranneTardella2006}.


The nonnegative orthant of a Cartesian system in the Euclidean space is a
self-dual latticial cone which is the positive cone of the coordinatewise ordering and 
defines ``well behaved'' lattice operations. Although important and therefore widely 
investigated, at the same time they are also very restrictive. There are several attempts to 
extend these 
lattice operations. One such extension proposed by M. S. Gowda, R. Sznajder 
and J. Tao \cite{GowdaSznajderTao2004} and related to self-dual cones is particularly
meaningful to us, because, apart from inheriting several properties of the lattice operations, 
their operations seem to be useful for handling the problem of the isotonicity of the metric
projections too.

In our recent paper \cite{NemethNemeth2012a} we have characterized the closed 
convex sets which are invariant with respect to these 
operations, showing that the metric projection onto these sets is isotone with respect to the 
order generated by the self-dual cone giving rise to the respective operations.

In this paper we show similar results for a general cone, by using two kinds of extended 
lattice operations: ones defined with the aid of the cone, while the others with the aid of its 
dual. These operations are strongly related: one can be expressed in terms of the other.
However, the parallel usage of them offers some techniqual facilities. Apart from extending 
the results of \cite{NemethNemeth2012a}, we show the equivalence between the sets which are 
invariant with respect to these operations and the ones which admit an isotone projection onto 
themselves.

The structure of this paper is as follows. In Section 2 
we fix the terminology regarding vectorial ordering. In Section 3
we define our main tools: the so called lattice-like operations, emphasizing the
relations with the lattice operations engendered by the nonnegative orthant and
the extended lattice operations defined by Gowda Sznajder and Tao as well.
In
the same section we gather and prove the main properties of these operations.
The main results and their proofs are contained in Section 4
namely in Theorems \ref{ISOINV} and \ref{FOOO}, and
Corollary \ref{POLYH}. They not only largely extend, but also strengthen the
results in \cite{NemethNemeth2012a}. As a consequence a full geometric
characterization of the closed convex sets admitting isotone
projection with respect to the ordering induced by the Lorentz cone is gained.
Besides the Lorentz cone, a special interest is focused on
the latticial or simplicial cones in Section 5%
.

Finally, we end our paper by
making some comments and raising some open questions in Section 6%
.

\section{Some terminology} \label{Some}
Denote by $\R^m$ the $m$-dimensional Euclidean space endowed with the standard inner 
product $\lng\cdot,\cdot\rng:\R^m\times\R^m\to\R.$ 

Throughout this note we shall use some standard terms and results from convex geometry 
(see e.g. \cite{Rockafellar1970}). 

Let $K$ be a \emph{convex cone} in $\R^m$, i. e., a nonempty set with
(i) $K+K\subset K$ and (ii) $tK\subset K,\;\forall \;t\in \R_+ =[0,+\infty)$. All cones used 
in this paper are convex.
The convex cone $K$ is called \emph{pointed}, if $(-K)\cap K=\{0\}.$

The cone $K$ is {\it generating} if $K-K=\R^m$.
 
For any $x,y\in \R^m$, by the equivalence $x\leq_K y\Leftrightarrow y-x\in K$, the 
convex cone $K$ 
induces an {\it order relation} $\leq_K$ in $\R^m$, that is, a binary relation, which is 
reflexive and transitive. This order relation is {\it translation invariant} 
in the sense that $x\leq_K y$ implies $x+z\leq_K y+z$ for all $z\in \R^m$, and 
{\it scale invariant} in the sense that $x\leq_Ky$ implies $tx\leq_K ty$ for any $t\in \R_+$.
If $\leq$ is a translation invariant and scale invariant order relation on $\R^m$, then 
$\leq=\leq_K$ with $K=\{x\in\R^m:0\leq x\}.$ The vector space $\R^m$
endowed with the relation $\leq_K$ is denoted by $(\R^m,K).$ If $K$ is pointed, then $\leq_K$ is 
\emph{antisymmetric} too, that is $x\leq_K y$ and $y\leq_K x$ imply that $x=y.$
The elements $x$ and $y$ are called \emph{comparable} if $x\leq_K y$
or $y\leq_K x.$

We say that $\leq_K$ is a \emph{latticial order} if for each pair
of elements $x,y\in \R^m$ there exist the least upper bound
$\sup\{x,y\}$ (denoted by $x\vee y$) and the greatest lower bound $\inf\{x,y\}$
(denoted by $x\wedge y$) of
the set $\{x,y\}$ with respect to the order relation $\leq_K$.
In this case $K$ is said to be a \emph{latticial or simplicial cone},
and $\R^m$ equipped with a latticial order is called 
\emph{Euclidean vector lattice}.

The \emph{dual} of the convex cone $K$ is the set
$$K^*:=\{y\in \R^n:\;\lng x,y\rng \geq 0,\;\forall \;x\in K\}.$$

The cone $K$ is called \emph{self-dual}, if $K=K^*.$ If $K$
is self-dual, then it is a generating, pointed, closed cone.

Suppose that $\R^m$ is endowed with a
Cartesian system. Let
$x,y\in \R^m$, $x=(x^1,...,x^m)$, $y=(y^1,...,y^m)$, where $x^i$, $y^i$ are the coordinates of
$x$ and $y$, respectively with respect to the Cartesian system. Then, the scalar product of $x$
and $y$ is the sum
$\lng x,y\rng =\sum_{i=1}^m x^iy^i.$

The set
\[\R^m_+=\{x=(x^1,...,x^m)\in \R^m:\; x^i\geq 0,\;i=1,...,m\}\]
is called the \emph{nonnegative orthant} of the above introduced Cartesian
system. A direct verification shows that $\R^m_+$ is a
self-dual cone.

Taking a Cartesian system in $\R^m$ and using the above introduced
notations,
the 
\emph{coordinatewise order}  $\leq$ in $\R^m$ is defined by
\[x=(x^1,...,x^m)\leq y=(y^1,...,y^m)\;\Leftrightarrow\;x^i\leq y^i,\;i=1,...,m.\]
By using the notion of the order relation induced by a cone, defined in the preceding
section, it is easy to see that $\leq =\leq_{\R^m_+}$.

With the above representation of $x$ and $y$, we define
$$x\wedge y=(\min \{x^1,y^1\},...,\min \{x^m,y^m\}),\;\;\textrm{and}\;\;x\vee y=(\max \{x^1,y^1\},...,\max \{x^m,y^m\}).$$

Then, $x\wedge y$ is  the greatest lower bound and $x\vee y$ is the least upper bound of 
the set $\{x,y\}$ with respect to the coordinatewise order. Thus, $\leq$ is a lattice order in $\R^m.$
The operations $\wedge$ and $\vee$ are called \emph{lattice operations}.

The subset $M\subset \R^m$ is called a \emph{sublattice of
the coordinatewise ordered Euclidean space} $\R^m$, if from
$x,y\in M$ it follows that $x\wedge y,\;x\vee y\in M.$ 

A \emph{hyperplane} (through the origin), is a set of form
\begin{equation}\label{hypersubspace}
H(u,0)=\{x\in \R^m:\;\lng u,x\rng =0\},\;\;u\not=0.
\end{equation}
For simplicity the hyperplanes will also be denoted by $H$.
The nonzero vector $u$ in the above formula is called \emph{the normal}
of the hyperplane. 

A \emph{hyperplane} (through $a\in\R^m$) is a set of form
\begin{equation}\label{hyperplane}
	H(u,a)=\{x\in \R^m:\;\lng u,x\rng =\lng u,a\rng\},\;\;u\not= 0.
\end{equation}

A hyperplane $H(u,a)$ determines two \emph{closed halfspaces} $H_-(a,u)$ and
$H_+(u,a)$  of $\R^m$, defined by

\[H_-(u,a)=\{x\in \R^m:\; \lng u,x\rng \leq \lng u,a\rng\},\]
and
\[H_+(u,a)=\{x\in \R^m:\; \lng u,x\rng \geq \lng u,a\rng\}.\]

\section{Metric projection and lattice-like operations} \label{Metric}

Denote by $P_D$ 
the projection mapping onto a nonempty closed convex set $D\subset \R^m,$ 
that is the mapping which associates
to $x\in \R^m$ the unique nearest point of $x$ in $D$ (\cite{Zarantonello1971}):

\[ P_Dx\in D,\;\; \textrm{and}\;\; \|x-P_Dx\|= \inf \{\|x-y\|: \;y\in D\}. \]

The nearest point $P_Dx$ can be characterized by

\begin{equation}\label{charac}
P_Dx\in D,\;\;\textrm{and}\;\;\lng P_Dx -x,P_Dx-y\rng \leq 0 ,\;\forall y\in D.
\end{equation}

From the definition of the projection or the characterization (\ref{charac}) there follow 
immediately the relations: 

%
\begin{equation}\label{et}
	P_{x+D}y=x+P_D(y-x) 
\end{equation}
for any $x,y\in\R^m$,

\begin{equation}\label{sun}
P_D(tx+(1-t)P_Dx)=P_Dx,
\end{equation}
for any $x,y\in\R^m$ and any $t\in [0,1]$.

We shall frequently use in the sequel the following simplified form of the theorem of Moreau \cite{Moreau1962}:

\begin{lemma}\label{lm}
        Let $K$ be a closed convex cone in $\R^m$ and $K^*$ its dual. For any $x$ in $\R^m$ we 
	have $x=P_Kx-P_{K^*}(-x)$ and $\lng P_Kx,P_{K^*}(-x)\rng=0$. The relation $P_Kx=0$ 
	holds if and only if $x\in -K^*$.
\end{lemma}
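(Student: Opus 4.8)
The plan is to prove Moreau's decomposition in the simplified form stated, following the classical route via the variational characterization \eqref{charac} of the metric projection onto a closed convex set, specialised to a closed convex cone.

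First I would fix $x\in\R^m$ and set $p=P_Kx$ and $q=P_{K^*}(-x)$. The goal is to show $x=p-q$ and $\lng p,q\rng=0$. I would begin by writing down what \eqref{charac} says for the cone $K$: $p\in K$ and $\lng p-x,p-y\rng\le 0$ for all $y\in K$. Since $K$ is a cone, I can substitute $y=0\in K$ and $y=2p\in K$ to obtain $\lng p-x,p\rng\le 0$ and $\lng p-x,-p\rng\le 0$, hence $\lng p-x,p\rng=0$, i.e. $\lng p,x\rng=\|p\|^2$. Feeding this back into the inequality gives $\lng x-p,y\rng\le 0$ for all $y\in K$, which says precisely that $x-p\in -K^*$, equivalently $-(x-p)=p-x\in K^*$.

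Next I would show that $p-x$ is in fact the projection $P_{K^*}(-x)$, i.e. $q=p-x$. Set $r=p-x\in K^*$. To verify $r=P_{K^*}(-x)$ via \eqref{charac} I must check $\lng r-(-x),r-z\rng\le 0$ for all $z\in K^*$, i.e. $\lng r+x,r-z\rng\le 0$. But $r+x=p$, so this reads $\lng p,r-z\rng\le 0$, i.e. $\lng p,r\rng\le\lng p,z\rng$ for all $z\in K^*$. Now $\lng p,r\rng=\lng p,p-x\rng=\|p\|^2-\lng p,x\rng=0$ by the computation above, and $\lng p,z\rng\ge 0$ for all $z\in K^*$ because $p\in K=K^{**}$ (or directly: $p\in K$ and $z\in K^*$). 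Hence the inequality holds, so indeed $q=P_{K^*}(-x)=p-x$, giving the decomposition $x=p-q=P_Kx-P_{K^*}(-x)$. Orthogonality $\lng p,q\rng=\lng p,p-x\rng=0$ is then exactly the identity already established. Finally, for the last assertion: if $P_Kx=0$ then $x=-q=-P_{K^*}(-x)\in -K^*$; conversely if $x\in -K^*$ then $-x\in K^*$, so $P_{K^*}(-x)=-x$ (a point of a closed convex set projects to itself), whence $P_Kx=x+P_{K^*}(-x)=x-x=0$.

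The only mildly delicate point — and the step I would treat most carefully — is the use of the cone structure to turn the one-sided variational inequality into the pair of relations $\lng p-x,p\rng=0$ and $p-x\in -K^*$; once that is in hand, everything else is a direct verification against \eqref{charac} plus the bilinearity of the inner product. No appeal to bipolarity is strictly needed: $\lng p,z\rng\ge0$ for $z\in K^*$ follows immediately from $p\in K$ and the definition of $K^*$, so the argument stays elementary and self-contained.
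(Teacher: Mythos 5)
Your proof is correct and complete. Note that the paper itself gives no proof of Lemma \ref{lm}: it is quoted as a simplified form of Moreau's theorem with a citation to \cite{Moreau1962}, so there is no in-paper argument to compare against. Your derivation is the standard self-contained one, and every step checks out: the substitutions $y=0$ and $y=2p$ into the variational inequality \eqref{charac} (both legitimate since $K$ is a cone) yield $\lng p-x,p\rng=0$, and feeding this back gives $\lng p-x,y\rng\ge 0$ for all $y\in K$, i.e.\ $p-x\in K^*$; the verification that $r=p-x$ satisfies \eqref{charac} for the set $K^*$ at the point $-x$ reduces, exactly as you say, to $\lng p,r\rng=0$ and $\lng p,z\rng\ge 0$ for $z\in K^*$, the latter following directly from $p\in K$ and the definition of $K^*$ without any appeal to bipolarity. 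The final equivalence $P_Kx=0\Leftrightarrow x\in-K^*$ is an immediate consequence of the decomposition together with the fact that points of a closed convex set are their own projections. One implicit ingredient you rely on is that \eqref{charac} is a genuine characterization (not merely a necessary condition), so that verifying it for $r$ identifies $r$ as the projection; the paper states \eqref{charac} in exactly this characterizing form, so this is fine. The argument is exactly what the citation to Moreau stands in for, and it could be inserted as a proof of the lemma without change.
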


Define the following operations in $\R^m$: 
\[x\sa y=P_{x-K}y,\textrm{ }x\su y=P_{x+K}y,\textrm{ }x\sa_* y=P_{x-K^*}y,\textrm{ and }x\su_* y=P_{x+K^*}y\]
 Assume that
the operations $\su$, $\sa$, $\su_*$ and $\sa_*$ have precedence over the addition of vectors and 
multiplication of vectors by scalars.

If $K$ is self-dual, then $\su = \su_*$ and $\sa =\sa_*$ and we arrive to the generalized
lattice operations defined by Gowda, Sznajder and Tao in \cite{GowdaSznajderTao2004}, and used 
in our paper \cite{NemethNemeth2012}. In the particular case of 
$K=\R^m_+$, one can easily check that $\sa=\sa_*=\wedge,\; \su=\su_*=\vee.$
That is $\sa$, $\sa_*$, $\su$ and $\su_*$ are some \emph{lattice-like operations}.

Although derived from the metric projection, the generalized lattice operations due to
Gowda, Sznajder and Tao as well as the above considered lattice-like operations
introduce a wieldy formalism, which allows the recognition of new interrelations in the field. 

Lemma \ref{lm} suggests strong connections between the lattice-like operations. These 
connections are exhibited by the following lemma:

\begin{lemma}\label{l0}
	The following relations hold for any $x,y\in\R^m$ : 
	\begin{enumerate}
		\item[\emph{(i)}] \[x\sa y=x-P_K(x-y)=y-P_{K^*}(y-x),\] 
			\[x\su y=x+P_K(y-x)=y+P_{K^*}(x-y),\] \[x\sa_*y=x-P_{K^*}(x-y)=y-P_K(y-x),\] 
			\[x\su_*y=x+P_{K^*}(y-x)=y+P_K(x-y).\] 
		\item[\emph{(ii)}] $x\sa_*y=y\sa x$  and $x\su_*y=y\su x$.
		\end{enumerate}
	\end{lemma}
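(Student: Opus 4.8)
The plan is to derive everything from Moreau's decomposition (Lemma \ref{lm}) together with the translation-covariance identity (\ref{et}). For part (i), take the first claimed equality $x\sa y = x - P_K(x-y)$. By definition $x\sa y = P_{x-K}y$, and since $x - K = x + (-K)$, formula (\ref{et}) gives $P_{x-K}y = x + P_{-K}(y-x)$. Now $-K$ is a closed convex cone whose dual is $-K^*$, so applying the Moreau identity of Lemma \ref{lm} to the cone $-K$ at the point $y-x$ yields $P_{-K}(y-x) = (y-x) - P_{(-K)^*}(-(y-x)) = (y-x) - P_{-(K^*)}(x-y)$. Alternatively, and more directly, one writes $P_{-K}(z) = -P_K(-z)$ (immediate from the definition of nearest point, replacing $D$ by $-D$), so $P_{-K}(y-x) = -P_K(x-y)$, giving $x\sa y = x - P_K(x-y)$. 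For the second expression $x\sa y = y - P_{K^*}(y-x)$: apply the full Moreau decomposition $w = P_K w - P_{K^*}(-w)$ with $w = x-y$, obtaining $P_K(x-y) = (x-y) + P_{K^*}(y-x)$, and substitute into $x - P_K(x-y)$ to get $x - (x-y) - P_{K^*}(y-x) = y - P_{K^*}(y-x)$.

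Having established both equalities for $x\sa y$, the remaining three lines of (i) follow by the same argument with systematic substitutions: the formulas for $x\su y = P_{x+K}y$ come from using $+K$ in place of $-K$ (equivalently, replacing $K$ by $-K$ throughout, or observing $P_{x+K}y = x + P_K(y-x)$ directly from (\ref{et})); the starred operations $x\sa_* y, x\su_* y$ are obtained verbatim by interchanging the roles of $K$ and $K^*$, since $(K^*)^* = K$ for a closed convex cone (this double-duality is the only place one uses closedness of $K$, and it is available from standard convex geometry as cited). So (i) is really one computation done four times with the substitutions $K \leftrightarrow -K$ and $K \leftrightarrow K^*$.

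For part (ii), the identity $x\sa_* y = y\sa x$ is now immediate from (i): reading off the formulas, $x\sa_* y = x - P_{K^*}(x-y)$ while $y\sa x = y - P_{K^*}(y-x)$ — wait, these are not literally identical symbol-strings, so the cleaner route is to use the \emph{other} representations: $x\sa_* y = y - P_K(y-x)$ (second expression in the third line of (i)) and $y\sa x = y - P_K(y-x)$ (first expression in the first line of (i) with $x,y$ swapped). These coincide exactly. Similarly $x\su_* y = y + P_K(x-y) = y\su x$, matching the first line of (i) with $x,y$ interchanged. Thus (ii) requires no new work beyond pattern-matching the formulas in (i).

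I do not anticipate a genuine obstacle here; the only subtlety worth stating explicitly is the reflexivity $(K^*)^* = K$, which needs $K$ closed and convex, and the elementary fact $P_{-D}(z) = -P_D(-z)$, both of which are standard. The proof is essentially bookkeeping organized around a single application of Lemma \ref{lm}.
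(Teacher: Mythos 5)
Your proposal is correct and follows essentially the same route as the paper: reduce $P_{x\pm K}y$ to $P_{\pm K}(y-x)$ via the translation identity (\ref{et}) and then convert between the $P_K$- and $P_{K^*}$-expressions with Moreau's decomposition (Lemma \ref{lm}), with part (ii) read off from the formulas in (i). The only additions are the explicit bookkeeping facts $P_{-D}(z)=-P_D(-z)$ and $(K^*)^*=K$, which the paper leaves implicit under ``the other relations can be shown similarly.''
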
	

\begin{proof}
	\begin{enumerate}
		\item[(i)] From equation (\ref{et}) and Lemma \ref{lm} we have 
			\[
			\begin{array}{rcl}
				x\su y & = & P_{x+K}y=x+P_K(y-x)=
				x+(P_K(y-x)-P_{K^*}(x-y))+P_{K^*}(x-y)\\
				& = & x+(y-x)+P_{K^*}(x-y)=y+P_{K^*}(x-y).
			\end{array}
			\]
			The other relations can be shown similarly.
		\item[(ii)] It follows easily from item (i).
		\end{enumerate}
		\end{proof}

Denote by $\le$ and $\le_*$ the relations defined by $K$ and $K^*$, respectively.

\begin{lemma}\label{ll}
	The following relations hold for any $x,y,z,w\in\R^m$ and any real scalar 
	$\lambda>0$. 
	\begin{enumerate}
		\item[\emph{(i)}] $x\sa y\le x$, $x\sa y\le_*y$, $x\sa_*y\le_*x$ and $x\sa_*y\le y$, and equalities hold if and 
			only if $x\le_*y$, $y\le x$, $x\le y$ and $y\le_*x$, respectively.
		\item[\emph{(ii)}] $x\le x\su y$, $y\le_*x\su y$, $x\le_*x\su_*y$ and $y\le x\su_*y$, and equalities hold if and 
			only if $y\le_*x$, $x\le y$, $y\le x$, $x\le_*y$, respectively.
		\item[\emph{(iii)}] $x\sa y+x\su_*y=x\sa_*y+x\su y=x\sa y+y\su x=x\sa_*y+y\su_* x=x+y$. 
		\item[\emph{(iv)}] $(x+z)\sa (y+z)=x\sa y+z$, $(x+z)\su (y+z)=x\su y+z$. $(x+z)\sa_*(y+z)=x\sa_*y+z$ and
			$(x+z)\su_*(y+z)=x\su_*y+z$.
		\item[\emph{(v)}] $(\lambda x)\sa (\lambda y)=\lambda x\sa y$,
			$(\lambda x)\su (\lambda y)=\lambda x\su y$, $(\lambda x)\sa_*(\lambda y)=\lambda x\sa_*y$ and 
			$(\lambda x)\su_*(\lambda y)=\lambda x\su_*y$.
		\item[\emph{(vi)}] $\lng x-x\sa y,x\su_*y-x\rng=0$ and $\lng x-x\sa_*y,x\su y-x\rng=0$. 
		\item[\emph{(vii)}] $(-x)\su(-y)=-x\sa y$ and $(-x)\su_*(-y)=-x\sa_*y$.
		\item[\emph{(viii)}] $\|x\sa y-z\sa w\|\leq \frac{3}{2}(\|x-z\|+\|y-w\|)$ and
		$\|x\su y-z\su w\|\leq \frac{3}{2}(\|x-z\|+\|y-w\|)$.
		\item[\emph{(ix)}]
		\[x\sa y=z\sa w,\;\;\forall \;\;z=\lambda x+(1-\lambda)x\sa y,\;\;w=\mu y+(1-\mu)x\sa y,\;\,\lambda,\;\mu \in [0,1],\]
	  \[x\su y=z\su w,\;\;\forall \;\;z=\lambda x+(1-\lambda)x\su y,\;\;w=\mu y+(1-\mu)x\su y,\;\,\lambda,\;\mu \in [0,1].\]
	\end{enumerate}
\end{lemma}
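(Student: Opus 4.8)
The plan is to reduce every item to the explicit formulas of Lemma \ref{l0}(i), combined with three elementary facts: the metric projection onto a closed convex set is nonexpansive, the projection onto a cone is positively homogeneous and translation arguments respect (\ref{et}), and the Moreau decomposition of Lemma \ref{lm} holds. Once the operations are written as $x\sa y=x-P_K(x-y)=y-P_{K^*}(y-x)$, $x\su y=x+P_K(y-x)=y+P_{K^*}(x-y)$, etc., most of the statements become one-line verifications.

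For (i) and (ii): from $x\sa y=x-P_K(x-y)$ one reads off $x-x\sa y=P_K(x-y)\in K$, i.e. $x\sa y\le x$, with equality precisely when $P_K(x-y)=0$, which by Lemma \ref{lm} means $x-y\in-K^*$, i.e. $x\le_* y$. The relation $x\sa y\le_* y$ and its equality case come from the second formula $x\sa y=y-P_{K^*}(y-x)$ together with $K^{**}=K$; the remaining inequalities in (i) and all of (ii) follow the same pattern (or from Lemma \ref{l0}(ii) combined with (vii)). For (iii) one simply adds the appropriate pairs of formulas so that the projection terms cancel, e.g. $x\sa y+x\su_*y=(x-P_K(x-y))+(y+P_K(x-y))=x+y$. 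Items (iv) and (v) are the identity (\ref{et}) and the positive homogeneity $P_K(\lambda(x-y))=\lambda P_K(x-y)$ of the projection, inserted into the same formulas. For (vi), $x-x\sa y=P_K(x-y)$ and $x\su_*y-x=P_{K^*}(-(x-y))$, so the claimed orthogonality is exactly $\lng P_Kv,P_{K^*}(-v)\rng=0$ from Lemma \ref{lm} with $v=x-y$ (and with $v=y-x$ for the second identity). Item (vii) is (\ref{et}) applied to $-x+K$ and to $-x+K^*$.

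The two items that require a genuine idea are (viii) and (ix). For (viii), the triangle inequality applied to $x\sa y=x-P_K(x-y)$ together with nonexpansiveness of $P_K$ gives $\|x\sa y-z\sa w\|\le\|x-z\|+\|(x-z)-(y-w)\|\le2\|x-z\|+\|y-w\|$, while the same argument applied to the dual representation $x\sa y=y-P_{K^*}(y-x)$ gives $\|x\sa y-z\sa w\|\le\|x-z\|+2\|y-w\|$; averaging the two bounds yields $\|x\sa y-z\sa w\|\le\frac{3}{2}(\|x-z\|+\|y-w\|)$, and the estimate for $\su$ is identical. For (ix), set $p:=x\sa y=P_{x-K}y$. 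Moving the second argument: $w=\mu y+(1-\mu)p=\mu y+(1-\mu)P_{x-K}y$, so by (\ref{sun}) with $D=x-K$ we get $P_{x-K}w=p$. Moving the first argument: since $p\le x$ we have $x-z=(1-\lambda)(x-p)\in K$, hence $z-K\subset x-K$; and $z-p=\lambda(x-p)\in K$, hence $p\in z-K$. Thus $p$ lies in the smaller closed convex set $z-K\subset x-K$ and is already the nearest point of the larger set $x-K$ to $w$, so it is a fortiori the nearest point of $z-K$ to $w$, i.e. $z\sa w=P_{z-K}w=p=x\sa y$. The statement for $\su$ follows either by the same argument with $K$ replaced by $-K$, or by applying the $\sa$ case to $-x,-y$ and invoking (vii).

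I expect the only real obstacle to be the geometric step in (ix): recognizing that translating the apex of the cone toward $p$ produces a nested family $z-K$ of translated cones all still containing $p$, which is precisely what forces the projection of the (also suitably moved) point $w$ to stay put. Everything else is bookkeeping with the formulas of Lemma \ref{l0} and the Moreau decomposition.
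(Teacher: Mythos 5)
Your proposal is correct, and for items (i)--(viii) it is essentially the paper's own proof: reduce everything to the representations $x\sa y=x-P_K(x-y)=y-P_{K^*}(y-x)$ etc.\ from Lemma \ref{l0}(i), use Moreau's lemma for the equality cases in (i)--(ii) and for the orthogonality in (vi), and for (viii) combine the two asymmetric bounds $2\|x-z\|+\|y-w\|$ and $\|x-z\|+2\|y-w\|$ obtained from the primal and dual representations. The only genuine divergence is in (ix). Both you and the paper handle the perturbation of the second argument identically, via (\ref{sun}) applied to $D=x-K$, giving $P_{x-K}w=x\sa y$. For the first argument the paper stays with (\ref{sun}): it rewrites $x\sa w=w\sa_*x=P_{w-K^*}x$ using Lemma \ref{l0}(ii), observes that $z=\lambda x+(1-\lambda)P_{w-K^*}x$, and applies (\ref{sun}) a second time with $D=w-K^*$. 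You instead argue geometrically: since $p:=x\sa y\le x$, one has $x-z=(1-\lambda)(x-p)\in K$ and $z-p=\lambda(x-p)\in K$, hence $z-K\subset x-K$ and $p\in z-K$, so the point $p$, being nearest to $w$ in the larger set and lying in the smaller one, is also nearest in the smaller set, i.e.\ $P_{z-K}w=p$. Your route avoids the dual cone and the $\sa_*$ identity entirely and makes the nesting of the translated cones explicit, which is arguably more transparent; the paper's route is more uniform in that it reuses the single algebraic fact (\ref{sun}) twice by exploiting the $K\leftrightarrow K^*$ symmetry. Both are complete, and your reduction of the $\su$ case to the $\sa$ case via (vii) is also sound.
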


\begin{proof}
\begin{enumerate}

		\item[(i)] It follows from the definitions of the operations, Lemma \ref{l0} and Lemma \ref{lm}. 
		\item[(ii)] It can be shown similarly to item (i).
\end{enumerate}

	Items (iii) and (iv) follow immediately from item (i) and (ii) of Lemma \ref{l0}. 
	
	Item (v) 
	follows easily from the positive homogeneity of $P_K$ and $P_{K^*}$, and item (i) of Lemma \ref{l0}. 
\begin{enumerate}
	\item[(vi)] By using item (i) of Lemma \ref{l0} and Lemma \ref{lm}, we get
			\[\lng x-x\sa y,x\su_*y-x\rng=\lng P_K(x-y),P_{K^*}(y-x)\rng=0.\]
\end{enumerate}
	
	Item (vii) follows from 
	item (i) of Lemma \ref{l0}.
	
	To verify item (viii) we use item (i) of Lemma \ref{l0} and the Lipschitz property of the
	metric projection (\cite{Zarantonello1971}) as follows:
	\[\|x\sa y-z\sa w\| =\|x-P_K(x-y)-z+P_K(z-w)\|\leq \|x-z\|+\|P_K(x-y)-P_K(z-w)\|\leq \]
	\[\|x-z\|+\|(x-y)-(z-w)\|\leq 2\|x-z\|+\|y-w\|,\]
	and by symmetry
	\[\|x\sa y-z\sa w\| \leq \|x-z\|+2\|y-w\|.\]
	By adding the obtained two relations we conclude the first relation in item (viii).
	The second relation can be deduced similarly. 
	
	By using the definition of $x\sa y$, we have, according to the formula (\ref{sun}), that
	\[x\sa y=P_{x-K}y=P_{x-K}(\mu y+(1-\mu)x\sa y)=P_{x-K}w =x\sa w =w\sa_*x.\]
	Now, according to this formula, the formula
	$z=\lambda x+(1-\lambda)w\sa_*x=\lambda x +(1-\lambda)P_{w-K^*}x$, and by swapping the 
	roles of $K$ and $K^*$ and using a similar argument as above, 
	we obtain
	\[w\sa_*x=P_{w-K^*}x=P_{w-K^*}(\lambda x+(1-\lambda)P_{w-K^*}x)=P_{w-K^*}z=
	w\sa_*z=z\sa w.\] In conclusion,
	\[x\sa y=z\sa w.\]
	This is the first formula in item (ix). 
	A similar argument yields the second relation in this item.
	%
	\end{proof}

\begin{remark}

Lemma \ref{l0} and Lemma \ref{ll} show that the operations $\sa_*$ and $\su_*$
can be expressed everywhere in what follows by $\sa$ and $\su$.
However, we shall occasionally use the first ones too
in order to emphasize certain assertions and to simplify the arguments.

\end{remark}

\section{Closed convex sets invariant with respect to the
lattice-like operations} 
\label{Closed}

The set $M\subset \R^m$ is said to be \emph{invariant with respect to the
operation} $\sa $ if from $x,\;y\in M$ it
follows that $x\sa y\in M$.  The
invariance of $M$ with respect to any of the operations $\su$, $\sa_*,$  and $\su_*$ can
be defined similarly.

The following lemma follows easily from item (ii) of Lemma \ref{l0}

\begin{lemma}\label{linv}
	Let $K\subset \R^m$ be a closed convex cone. If $C$ is invariant with respect to 
	one of the operations $\sa$, $\sa_*$ and one of the operations $\su$, $\su_*$, then 
	$C$ is invariant with respect to all operations $\sa$, $\sa_*$, $\su$ and $\su_*$.
\end{lemma}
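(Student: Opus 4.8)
The plan is to read off everything from part (ii) of Lemma \ref{l0}, which states that $x\sa_* y = y\sa x$ and $x\su_* y = y\su x$ for all $x,y\in\R^m$. The key observation is that these identities convert invariance under a ``starred'' operation into invariance under the corresponding ``unstarred'' one after merely swapping the two arguments, and conversely; since $C$ contains $x$ and $y$ together with any pair obtained from them, the order of the arguments is immaterial for the invariance property.

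First I would show that invariance of $C$ under $\sa$ is equivalent to invariance of $C$ under $\sa_*$. Indeed, if $C$ is $\sa$-invariant and $x,y\in C$, then $x\sa_* y = y\sa x$ by Lemma \ref{l0}(ii); since $y,x\in C$, the right-hand side lies in $C$, so $x\sa_* y\in C$, and $C$ is $\sa_*$-invariant. The reverse implication is proved the same way, using $x\sa y = y\sa_* x$ (again Lemma \ref{l0}(ii), with the roles of $x$ and $y$ interchanged). An identical argument with $\su$, $\su_*$ in place of $\sa$, $\sa_*$ shows that invariance under $\su$ is equivalent to invariance under $\su_*$.

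Now I would combine these two equivalences with the hypothesis. Invariance of $C$ under one of $\sa$, $\sa_*$ already forces invariance under both of them, and invariance under one of $\su$, $\su_*$ forces invariance under both of those; hence $C$ is invariant with respect to all four operations $\sa$, $\sa_*$, $\su$ and $\su_*$.

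There is essentially no obstacle here: the statement is a bookkeeping consequence of the symmetry relations in Lemma \ref{l0}(ii). The only point requiring a little care is to record both directions of each equivalence, so that the phrase ``one of'' in the hypothesis can legitimately be upgraded to ``both of''.
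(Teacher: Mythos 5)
Your proof is correct and follows exactly the route the paper intends: the paper states that Lemma \ref{linv} ``follows easily from item (ii) of Lemma \ref{l0}'', and your argument simply fills in the details of that deduction, showing via the identities $x\sa_* y = y\sa x$ and $x\su_* y = y\su x$ that invariance under either member of each pair is equivalent to invariance under the other. Nothing is missing.
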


Let $K$ be a nonzero closed convex cone. We should simply call a set $M$ which is invariant 
with respect to the operations $\sa$, $\sa_*$, $\su$ and $\su_*$ \emph{$K$-invariant}. By Lemma 
\ref{linv}, it is enough to suppose that $M$ is invariant with respect to $\sa$ and $\su$, or 
$\sa_*$ and $\su$, or $\sa$ and $\su_*$, or $\sa_*$ and $\su_*$.

Recall that $\le$ and $\le_*$ denote the relations defined by $K$ and $K^*$, respectively.

If $K$ is a nonzero closed convex cone, then the closed convex set $C\subset \R^m$ is called a
\emph{$K$-isotone ($K^*$-isotone) projection set} or simply \emph{$K$-isotone ($K^*$-isotone)}
if $x\leq y$ implies $P_C x\leq P_C y$ (and respectively
$x\leq_* y$ implies $P_C x\leq_* P_C y$). In this case we use equivalently
the term \emph{$P_C$ is $K$-isotone} (respectively \emph{$P_C$ is $K^*$-isotone}).

\begin{theorem}\label{ISOINV}
	Let $K\subset \R^m$ be a closed convex cone. Then, $C$ is $K$-invariant, if and only if $P_C$ is 
	$K$-isotone. 
\end{theorem}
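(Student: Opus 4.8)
The plan is to prove the two implications separately, in each case reducing the statement about the projection to a statement about the lattice-like operations via the variational characterization (\ref{charac}) and the formulas in Lemma \ref{l0}.

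For the direction ``$P_C$ $K$-isotone $\Rightarrow$ $C$ is $K$-invariant'', I would take $x,y\in C$ and aim to show $x\sa y\in C$ (by Lemma \ref{linv} this suffices). The idea is to locate a point whose projection onto $C$ must equal $x\sa y$. A natural candidate is $x\sa y$ itself together with a comparison point: note $x\sa y\le x$ and, since $x\in C$, one expects $P_C(x\sa y)\le P_C x=x$; similarly using $x\sa y\le_* y$ one wants a $\le_*$-comparison with $y$. The cleaner route is to use item (ix) of Lemma \ref{ll}: the segment points $z=\lambda x+(1-\lambda)(x\sa y)$ satisfy $z\sa y = x\sa y$, and as $\lambda\to 1$ we have $z\to x\in C$. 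Since $x\sa y\le x$ (hence $x\sa y\le z$ for these $z$) and $x\sa y\in C$ would be what we want — instead, apply $K$-isotonicity to the inequality $x\sa y\le x$ to get $P_C(x\sa y)\le x$, and apply $K^*$-isotonicity (which by Lemma \ref{linv}/symmetry of the hypothesis we also have, since $K$-isotone and $K^*$-isotone projection sets coincide for $C$ $K$-invariant — but that is what we are proving, so instead argue directly) to the inequality $x\sa y\le_* y$. The honest argument: set $p=P_C(x\sa y)$. From $x\sa y\le x$ and monotonicity, $p\le P_Cx=x$, i.e. $x-p\in K$. One then shows $p-(x\sa y)\in K$ as well using the characterization (\ref{charac}) with $y$ replaced by $x\in C$: $\lng p-(x\sa y),p-x\rng\le 0$, and combining with $x-p\in K$, $x\sa y-x=-P_K(x-y)\in -K$, via Lemma \ref{lm}'s orthogonality, forces $p=x\sa y$, so $x\sa y\in C$.

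For the converse ``$C$ $K$-invariant $\Rightarrow$ $P_C$ $K$-isotone'', I would take $x\le y$ and must show $P_Cx\le P_Cy$, i.e. $P_Cy-P_Cx\in K$. Write $a=P_Cx$, $b=P_Cy$; both lie in $C$, so by $K$-invariance $a\su b\in C$ and $a\sa b\in C$. The strategy is to exploit the optimality of $a$ and $b$ against these new feasible points. Using (\ref{charac}) for $x$ against the competitor $a\su b\in C$: $\lng a-x, a-(a\su b)\rng\le 0$; by Lemma \ref{l0}, $a\su b-a = P_K(b-a)$, so $\lng a-x, P_K(b-a)\rng\ge 0$, i.e. $\lng x-a, P_K(b-a)\rng\le 0$. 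Symmetrically, using (\ref{charac}) for $y$ against $a\sa b\in C$: $b-(a\sa b)=P_K(b-a)$ (from Lemma \ref{l0}, since $a\sa b=b-P_K(b-a)$... checking signs: $a\sa b=a-P_K(a-b)=b-P_{K^*}(b-a)$, so I would instead use $a\su_* b$ or re-pick the competitor so that the relevant difference is again $\pm P_K(b-a)$), yielding $\lng y-b, P_K(b-a)\rng\ge 0$. Adding the two inequalities and using $x\le y$ (so $y-x\in K$) together with the self-duality-type pairing $\lng y-x, P_K(b-a)\rng\ge 0$ and $\lng P_K(b-a),P_{K^*}(a-b)\rng=0$ from Lemma \ref{lm}, I would derive $\|P_K(b-a)\|^2\le 0$ after collecting terms, forcing $P_K(b-a)=0$; but that gives $b-a\in -K^*$, not $b-a\in K$. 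So the correct bookkeeping must instead produce $P_{K^*}(a-b)=0$ (equivalently $a-b\in -K$, i.e. $b-a\in K$), which means the competitors should be chosen as $a\su b$ tested against $y$ and $a\sa b$ tested against $x$. I would set it up that way: $\lng y-b,\,b-(a\su b)\rng = \lng y-b,\,-P_{K^*}(a-b)\rng$ hmm — the point is to pick, among the four operations, the pair whose defining projections are exactly $P_{K^*}(a-b)$, so that adding the two variational inequalities and the inequality $\lng y-x,P_{K^*}(a-b)\rng\ge 0$ collapses to $\lng P_{K^*}(a-b),P_{K^*}(a-b)\rng \le 0$.

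The main obstacle, and the place requiring genuine care rather than routine manipulation, is exactly this sign/orthogonality bookkeeping in the converse: one must choose the right competitor points among $a\sa b,\ a\su b,\ a\sa_* b,\ a\su_* b$ (all in $C$ by $K$-invariance and Lemma \ref{linv}) so that the two instances of the variational characterization (\ref{charac}), when added to the hypothesis $y-x\in K$ paired against the appropriate Moreau component, telescope — via $\lng P_Kv, P_{K^*}(-v)\rng=0$ of Lemma \ref{lm} — into a nonpositive squared norm. Once the correct pairing is identified, each individual step is just (\ref{charac}), Lemma \ref{l0}, and Lemma \ref{lm}. I expect the forward direction to be comparatively short, with the converse carrying the real content; the symmetry between $K$ and $K^*$ provided by Lemma \ref{l0}(ii) and Lemma \ref{linv} is what guarantees we have enough feasible competitors to make the argument close.
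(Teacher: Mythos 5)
There is a genuine gap in your direction ``$P_C$ $K$-isotone $\Rightarrow$ $C$ $K$-invariant''. Setting $p=P_C(x\sa y)$, you correctly get $p\le x$ from isotonicity applied to $x\sa y\le x$, i.e. $p\in x-K$. But you then test (\ref{charac}) at $x\sa y$ against the competitor $x$, obtaining $\lng p-x\sa y,\,p-x\rng\le0$, and claim that together with $x-p\in K$ and $x-x\sa y=P_K(x-y)\in K$ this ``forces $p=x\sa y$''. It does not: expanding gives only $\|p-x\|^2\le\lng P_K(x-y),\,x-p\rng$, whose right-hand side has no sign for a general cone ($K$ is not assumed subdual); and any route ending in ``$p-x\sa y\in K\cap(-K)$, hence $p=x\sa y$'' would additionally require $K$ pointed, which the theorem does not assume. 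The missing idea --- and the paper's actual argument --- is to use the competitor $y$ and to exploit that $x\sa y$ is itself a projection, namely $x\sa y=P_{x-K}y$: since $p\in x-K$ and $p\ne x\sa y$, uniqueness of the nearest point in $x-K$ gives the strict inequality $\|y-x\sa y\|<\|y-p\|$, while (\ref{charac}) at $x\sa y$ with competitor $y\in C$ gives $\|y-p\|\le\|y-x\sa y\|$, a contradiction. (Equivalently: add $\lng p-x\sa y,\,p-y\rng\le0$ and $\lng x\sa y-y,\,x\sa y-p\rng\le0$ to get $\|p-x\sa y\|^2\le0$.)

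Your converse direction (``invariant $\Rightarrow$ isotone'') is the paper's strategy and does close, but you leave the decisive choice unresolved; for the record, the correct competitors are $a\su b$ tested in (\ref{charac}) at $b=P_Cy$ and $a\sa_* b$ (not $a\sa b$) tested at $a=P_Cx$, since by Lemma \ref{l0} both relevant differences equal the same vector $w:=P_{K^*}(a-b)$, namely $a\su b-b=a-a\sa_*b=w$. Combining the two variational inequalities yields $\lng (y-x)+(a-b),\,w\rng\le0$; since $\lng y-x,w\rng\ge0$ ($y-x\in K$, $w\in K^*$) and $\lng a-b,w\rng=\|w\|^2$ by Moreau's orthogonality, this forces $w=0$, i.e. $a-b\in-K$, i.e. $P_Cx\le P_Cy$. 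Your first bookkeeping attempt indeed lands on the useless conclusion $P_K(b-a)=0$, so the star on the meet is essential. The paper runs the same computation as a contradiction argument with squared distances, using exactly the pair $u\su v$, $u\sa_*v$.
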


\begin{proof}
Assume that the closed convex set $C$ is $K$-invariant.
	Let $x,y\in \R^m$ with $x\le y$ and denote $u=P_Cx\in C$, $v=P_Cy\in C$. 
	
	Assume that $u\le v$ is false. Then, from $u\su v\in C$, the definition of the
	projection and item (ii) of Lemma \ref{ll}, we have $\|y-v\|<\|y-u\su v\|$. Hence,
	from
	\[\|y-v\|^2=\|y-u\su v\|^2+\|u\su v-v\|^2+2\lng y-u\su v,u\su v-v\rng,\] 
	it follows that 
	\[\|u\su v-v\|^2<2\lng u\su v-y,u\su v-v\rng.\]
	On the other hand, since $u\sa_*v\in C$, we 
	have $\|x-u\|\le\|x-u\sa_*v\|$, and thus we have similarly that
	\[\|u\sa_*v-u\|^2\le2\lng u\sa_*v-x,u\sa_*v-u\rng.\]
	By summing up the latter two inequalities and using item (iii) of Lemma \ref{ll}, it 
	follows that
	\[\lng u\su v-v,u\su v-v\rng=\|u\su v-v\|^2<\lng u\su v-y,u\su v-v\rng+
	\lng x-u\sa_*v,u\su v-v\rng.\]
	Thus, 
	\[\lng y-x-(v-u\sa_*v),u\su v-v\rng<0.\]
	By combining the latter inequality with item (vi) of Lemma \ref{ll}, we obtain that 
	\[\lng y-x,u\su v-v\rng<0.\] But this is a contradiction, because $y-x\in K$ and
	$u\su v-v\in K^*$ (by item (ii) of Lemma \ref{ll}).
	
	The obtained contradiction shows that $P_C$ must be $K$-isotone.

Let us see now that if $P_C$ is $K$ isotone, then $C$ is $K$-invariant.

Assume the contrary: $P_C$ is $K$-isotone, but there exist $x,\;y\in C$ such that either
$x\sa y\notin C$, or $x\su y\notin C$.

Assume, that $x\sa y\notin C$.

Since $x\sa y \leq x$ and $P_C$ is $K$-isotone, it follows that $P_C(x\sa y)\leq x,$
that is, $P_C(x\sa y)\in x-K.$  By our working hypothesis $x\sa y\not= P_C(x\sa y)$ and
by the definition of $x\sa y= P_{x-K}y$, we must have
\begin{equation}\label{ellen1}
\|y-x\sa y\|<\|y-P_C(x\sa y)\|.
\end{equation}

Since $y\in C$, by the characterization \eqref{charac} of the projection we have:
\begin{gather*}
	\lng y-P_C(x\sa y),x\sa y-y\rng+\|y-P_C(x\sa y)\|^2\\
	=\lng y-P_C(x\sa y),x\sa y-y+y-P_C(x\sa y)\rng\\
	=\lng y-P_C(x\sa y),x\sa y-P_C(x\sa y)\rng\le0,
\end{gather*}

which, by using the Cauchy inequality, implies 
\begin{equation}\label{edouble}
	\|y-P_C(x\sa y)\|^2\le\lng y-P_C(x\sa y),y-x\sa y\rng\le\|y-P_C(x\sa y)\|\|y-x\sa y\|,
\end{equation}
If $y=P_C(x\sa y)$, then the inequality
\begin{equation}\label{ejav}
	\|y-P_C(x\sa y)\|\le\|y-x\sa y\|,
\end{equation}
holds trivially. If $y\neq P_C(x\sa y)$, then \eqref{ejav} follows from dividing 
\eqref{edouble} by $\|y-P_C(x\sa y)\|$. However, \eqref{ejav} contradicts (\ref{ellen1}).

The case of $x\su y\notin C$ can be handled similarly.

The obtained contradictions show that $C$ must be $K$-invariant. 
\end{proof}

\begin{example}
The set
\[L_{m+1}=\{(x,x^{m+1})\in\R^{m+1}:\textrm{ }x\in\R^m,\textrm{ }x^{m+1}\in\R\textrm{ and }
\|x\|\leq x^{m+1}\},\] 
 is a self-dual cone called 
\emph{$m+1$-dimensional second order cone}, or 
\emph{$m+1$-dimensional Lorentz cone}, or \emph{$m+1$-dimensional ice-cream cone} 
(\cite{GowdaSznajderTao2004}). 

By using Theorem \ref{ISOINV}, we can strengthen the main result in 
\cite{NemethNemeth2012a} regarding the Lorentz cone $L_{m+1}$ as follows:

Let $M$ be a closed convex subset with nonempty interior in $\R^{m+1}$ with $m>1$.
Then, the following assertions are equivalent:
\begin{enumerate}
\item[\emph{(i)}]  $M$ is invariant
with respect to the operations $\sa$ and $\su$ defined by the Lorentz
cone $L_{m+1}$,
\item[\emph{(ii)}] $M$ is an $L_{m+1}$-isotone projection set,
\item[\emph{(iii)}]
\begin{equation*} 
M=C\times \R, 
\end{equation*}
where $C$ is a closed convex set with nonempty interior in $\R^m$.
\end{enumerate}

In \cite{NemethNemeth2012a} we proved that (iii)$\Leftrightarrow$(ii)$\Rightarrow$(i), but not
the implication (i)$\Rightarrow$(ii).
\end{example}

\begin{lemma}\label{latprop}
	Let $K\subset \R^m$ be a closed convex cone. If 
	$M,\;M_i,\;\subset\R^m,\;\;i\in \mathcal I$ are $K$-invariant sets, then
	\begin{enumerate}
		\item[\emph{(i)}] $\cap_{i\in \mathcal I} M_i$ is also $K$-invariant,
		\item[\emph{(ii)}] $\eta M+a$ is also $K$-invariant for any $a\in\R^m$ and 
			$\eta\in\R$.
		
	\end{enumerate}
\end{lemma}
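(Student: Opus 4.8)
The plan is to derive both parts directly from the definition of $K$-invariance together with the algebraic identities of Lemma \ref{ll}; there is no deep obstacle, only a mild one in the case of a negative dilation factor.

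For part (i), recall that $K$-invariance of a set means invariance with respect to each of the four operations $\sa$, $\sa_*$, $\su$, $\su_*$. Hence if $x,y\in\cap_{i\in\mathcal I}M_i$, then $x,y\in M_i$ for every $i\in\mathcal I$, and the $K$-invariance of each $M_i$ gives $x\sa y\in M_i$ for every $i$, so $x\sa y\in\cap_{i\in\mathcal I}M_i$. The same reasoning applies verbatim with $\sa$ replaced by $\sa_*$, $\su$, $\su_*$, so $\cap_{i\in\mathcal I}M_i$ is $K$-invariant.

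For part (ii), by Lemma \ref{linv} it suffices to show that $\eta M+a$ is invariant with respect to $\sa$ and $\su$. If $M=\emptyset$ there is nothing to prove, so pick $x,y\in\eta M+a$ and write $x=\eta u+a$, $y=\eta v+a$ with $u,v\in M$. If $\eta=0$, then $\eta M+a=\{a\}$, and item (i) (resp. item (ii)) of Lemma \ref{ll} applied with $x=y=a$ gives $a\sa a=a$ and $a\su a=a$, since $a\le_* a$ and $a\le a$ hold by reflexivity; so the singleton is trivially invariant. If $\eta>0$, item (iv) of Lemma \ref{ll} gives $x\sa y=(\eta u)\sa(\eta v)+a$, and item (v) gives $(\eta u)\sa(\eta v)=\eta(u\sa v)$; since $M$ is $K$-invariant, $u\sa v\in M$, whence $x\sa y=\eta(u\sa v)+a\in\eta M+a$. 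The argument for $\su$ is identical, using $u\su v\in M$.

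The only case needing a little care is $\eta<0$; write $\eta=-\mu$ with $\mu>0$. By item (iv) of Lemma \ref{ll}, $x\sa y=\bigl((-\mu u)\sa(-\mu v)\bigr)+a$. Apply item (vii) of Lemma \ref{ll}, namely $(-x)\su(-y)=-x\sa y$, with $x=-\mu u$ and $y=-\mu v$: this yields $(\mu u)\su(\mu v)=-\bigl((-\mu u)\sa(-\mu v)\bigr)$, hence, by item (v), $(-\mu u)\sa(-\mu v)=-\bigl((\mu u)\su(\mu v)\bigr)=-\mu(u\su v)$. Therefore $x\sa y=-\mu(u\su v)+a=\eta(u\su v)+a\in\eta M+a$, because $u\su v\in M$. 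Symmetrically, using item (vii) with $x=\mu u$, $y=\mu v$, one gets $(-\mu u)\su(-\mu v)=-\mu(u\sa v)$, so $x\su y=\eta(u\sa v)+a\in\eta M+a$. Thus $\eta M+a$ is invariant with respect to $\sa$ and $\su$, hence $K$-invariant by Lemma \ref{linv}. The (mild) obstacle is precisely this sign reversal: for $\eta<0$ the $\sa$-invariance of $\eta M+a$ is traded for the $\su$-invariance of $M$ and vice versa, which is exactly why it matters that $K$-invariance is a joint condition on all four lattice-like operations.
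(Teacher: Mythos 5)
Your proof is correct and follows essentially the same route as the paper, which likewise disposes of (i) as immediate from the definition and derives (ii) from items (iv), (v) and (vii) of Lemma \ref{ll}; you simply make explicit the sign-reversal for $\eta<0$ (trading $\sa$-invariance of $\eta M+a$ for $\su$-invariance of $M$ via item (vii)) that the paper leaves to the reader.
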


\begin{proof}
	The first assertion is trivial and the second follows easily from items (iv), (v) and 
	(vii) of Lemma \ref{ll}.
	\end{proof}

\begin{lemma}\label{halfhyper}
The halfspace $H_-$ is $K$-invariant 
if and only if the hyperplane $H$ has this property.
\end{lemma}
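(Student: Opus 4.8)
The plan is to reduce, via Theorem~\ref{ISOINV}, to a statement about metric projections. Since a hyperplane and a closed halfspace are closed convex sets, $H$ is $K$-invariant iff $P_H$ is $K$-isotone, and $H_-$ is $K$-invariant iff $P_{H_-}$ is $K$-isotone; so it suffices to prove that $P_H$ is $K$-isotone exactly when $P_{H_-}$ is. Writing $H=H(u,a)$, I would use the explicit formulas
\[
P_Hx=x-\f{\lng u,x-a\rng}{\|u\|^2}\,u,\qquad
P_{H_-}x=x-\f{(\lng u,x-a\rng)_+}{\|u\|^2}\,u,
\]
where $(\,\cdot\,)_+=\max\{\,\cdot\,,0\}$, together with the shorthand $Qz:=z-\f{\lng u,z\rng}{\|u\|^2}\,u$ for the orthogonal projection of $z$ onto the subspace $u^{\perp}$ (so $Qz=P_{H(u,0)}z$).

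For $x\le y$ set $z=y-x\in K$. A one-line computation gives $P_Hy-P_Hx=Qz$, so, since $\{y-x:x\le y\}=K$, the map $P_H$ is $K$-isotone precisely when $Qz\in K$ for every $z\in K$. For $P_{H_-}$ the key ingredient is the elementary fact that the convex function $t\mapsto t_+$ is piecewise linear with slopes only $0$ and $1$; consequently, for any reals $s,t$ one has $(s+t)_+-s_+=\theta\,t$ for some $\theta=\theta(s,t)\in[0,1]$, with $\theta=1$ whenever $s\ge 0$ and $s+t\ge 0$. Applying this with $s=\lng u,x-a\rng$ and $t=\lng u,z\rng$ (so $s+t=\lng u,y-a\rng$) yields
\[
P_{H_-}y-P_{H_-}x=z-\theta\,\f{\lng u,z\rng}{\|u\|^2}\,u=(1-\theta)z+\theta\,Qz,
\]
a convex combination of $z$ and $Qz$.

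Both implications then drop out. If $P_H$ is $K$-isotone, then $Qz\in K$ for all $z\in K$, hence $(1-\theta)z+\theta Qz$ lies in the convex cone $K$, so $P_{H_-}$ is $K$-isotone. Conversely, if $P_{H_-}$ is $K$-isotone, then, given $z\in K$, I would pick $x$ with $\lng u,x-a\rng$ large enough that $\lng u,x-a\rng>0$ and $\lng u,x+z-a\rng>0$ (possible since $u\neq 0$); with $y=x+z$ we get $\theta=1$, so $Qz=P_{H_-}y-P_{H_-}x\in K$, i.e. $P_H$ is $K$-isotone.

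There is no serious obstacle here; the only two points requiring a moment's care are the sign analysis behind the identity $(s+t)_+-s_+=\theta t$ and the choice of a suitable pair $x,y$ straddling $H$ in the converse direction, both of which are short. As a remark, the implication from $H_-$ being $K$-invariant to $H$ being $K$-invariant can alternatively be obtained with no projection formula at all: $H(u,a)=H_-(u,a)\cap H_+(u,a)$, and $H_+(u,a)$ is the image of $H_-(u,a)$ under the rigid motion $x\mapsto -x+\tfrac{2\lng u,a\rng}{\|u\|^2}\,u$, hence $K$-invariant by Lemma~\ref{latprop}(ii); then Lemma~\ref{latprop}(i) finishes that direction. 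Only the reverse implication genuinely needs the computation above.
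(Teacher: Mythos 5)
Your proof is correct, but it takes a genuinely different route from the paper's. The paper never passes through Theorem~\ref{ISOINV}: it argues directly with the lattice-like operations, proving both directions by contradiction. For ``$H$ invariant $\Rightarrow H_-$ invariant'' it takes $x,y\in H_-$ with, say, $x\sa y\in\inte H_+$, intersects the segments $[x,x\sa y]$ and $[y,x\sa y]$ with $H$ to get points $z,w\in H$, and invokes item (ix) of Lemma~\ref{ll} to conclude $z\sa w=x\sa y\notin H$; for the converse it takes $x,y\in H$ with $x\su y\in\inte H_-$ and uses the identity $\lng u,x+y\rng=\lng u,x\su y\rng+\lng u,y\sa x\rng$ from item (iii) to force $y\sa x\in\inte H_+$. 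You instead translate both invariance statements into isotonicity of $P_H$ and $P_{H_-}$ via Theorem~\ref{ISOINV} and then compare the explicit projection formulas: the identity $P_{H_-}y-P_{H_-}x=(1-\theta)z+\theta Qz$ with $\theta\in[0,1]$ (and $\theta=1$ once $x$ is pushed far into $H_+$) is a clean and correct way to pass between the two maps, and your closing remark that $H=H_-\cap H_+$ with $H_+=-H_-+\tfrac{2\lng u,a\rng}{\|u\|^2}u$ gives one direction purely from Lemma~\ref{latprop} is a nice shortcut the paper does not exploit. What your approach buys is elementarity and explicitness -- everything reduces to the scalar function $t\mapsto t_+$ -- at the cost of leaning on the main equivalence theorem; what the paper's approach buys is independence from Theorem~\ref{ISOINV} and a demonstration of how the algebraic identities (iii) and (ix) of Lemma~\ref{ll} substitute for coordinate computations, which is the style the subsequent Lemma~\ref{tanginvar} also follows. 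Since Theorem~\ref{ISOINV} is proved before this lemma and without reference to it, there is no circularity in your argument.
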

\begin{proof}
According to Lemma \ref{linv}, it is enough to carry out the proof for the
case of the invariance with respect to $\sa$ and $\su$.

According to item (ii) of Lemma \ref{latprop}, we can assume that $0\in H$.

Suppose that $H$ is invariant, but $H_-$ is not. 
Then, there exist some $x,y\in H_-$
such that $x\su_*y=y\su x\notin H_-$ or $x\sa y\notin H_-$. Assume that $x\sa y\notin H_-$.
Then, $x\sa y\in \inte H_+.$ The line segment $[x,x\sa y]$ meets $H$ in
$z=\lambda x +(1-\lambda)x\sa y,\;\lambda \in (0,1],$ while the line segment
$[y,x\sa y]$ meets $H$ at $w=\mu y  +(1-\mu)x\sa y,\;\mu \in (0,1].$
According to item (ix) in Lemma \ref{ll}, we have then
\[z\sa w=x\sa y\notin H,\]
which contradicts the invariance of $H$.

Suppose now that $H_-$ is invariant, but $H$ is not. Then, there exist some $x,y\in H$
such that $x\su_* y=y\su x\notin H$ or $x\sa y\not \in H$. Since $H_-$ is invariant, we can
assume that $x\su y\in \inte H_-$. Let $u$ be the normal of $H$. Then, $\lng u,x\su y\rng<0.$
By using the relation in item (iii) of Lemma \ref{ll}, we have then
\[0=\lng u,x+y\rng=\lng u, x\su y\rng +\lng u,y\sa x\rng.\]
Whereby, by using the relation $\lng u,x\su y\rng <0,$
we conclude that
\[\lng u,y\sa x\rng >0,\]
that is, $y\sa x\in \inte H_+$, contradicting the invariance of $H_-$. Similarly 
$x\sa y\notin H$ leads to a contradiction.
\end{proof}

\begin{lemma}\label{tanginvar}
Suppose that $C$ is a $K$-invariant closed convex set with nonempty interior,
and $H$ is a hyperplane tangent to $C$ in some point of $\bdr C$.
Then, $H$ is $K$-invariant.
\end{lemma}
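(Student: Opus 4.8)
The plan is to reduce the tangent-hyperplane case to the halfspace case already handled in Lemma~\ref{halfhyper}. Since $H$ is tangent to $C$ at a boundary point $a\in\bdr C$, it is a supporting hyperplane, so one of the two closed halfspaces it bounds, say $H_-$, contains $C$. By Lemma~\ref{halfhyper} it suffices to show this supporting halfspace $H_-$ is $K$-invariant; equivalently, by the translation invariance (item (ii) of Lemma~\ref{latprop}) I may assume $a=0$, so $0\in H$ and $C\subset H_-$ with $0\in C\cap H$. So the real content is: \emph{a supporting halfspace of a $K$-invariant closed convex set with nonempty interior is itself $K$-invariant.}

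To prove that, I would argue by contradiction. Suppose $H_-=H_-(u,0)$ supports $C$ at $0$ but $H_-$ is not $K$-invariant. By Lemma~\ref{linv} it is enough to find a violation of invariance under $\sa$ or $\su$; say there exist $x,y\in H_-$ with $x\sa y\notin H_-$, i.e. $\lng u, x\sa y\rng>0$. The key idea is to slide $x$ and $y$ toward points of $C$ without changing $x\sa y$, using the scaling/affine flexibility in item (ix) of Lemma~\ref{ll}: for $\lambda,\mu\in[0,1]$ the points $z=\lambda x+(1-\lambda)(x\sa y)$ and $w=\mu y+(1-\mu)(x\sa y)$ satisfy $z\sa w=x\sa y$. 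Since $\inte C\neq\emptyset$ and $0\in C$, the segment from $0$ to an interior point lies in $\inte C$; I want to push $x$ and $y$ very close to $0$ (which is in $C$) while keeping $z\sa w=x\sa y\notin H_-$. Concretely, replacing $x$ by $\lambda x+(1-\lambda)(x\sa y)$ with $\lambda$ small does not obviously land in $C$ because $x\sa y$ itself is outside $H_-$; instead I would first use item (ii) of Lemma~\ref{latprop} together with the homogeneity in item (v) of Lemma~\ref{ll} to rescale the whole configuration, shrinking $x,y$ (and hence $x\sa y$, by homogeneity of $\sa$) toward $0$, so that $x,y$ lie in $\inte C$; the scaled $x\sa y$ still has positive inner product with $u$, contradicting $C\subset H_-$ once we know $x\sa y\in C$ by $K$-invariance of $C$. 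Thus the halfspace must be invariant, and by Lemma~\ref{halfhyper} so is $H$.

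Let me restate the clean version of this last step, since that is the crux. Pick an interior point $c\in\inte C$; for small $t>0$, $tc\in\inte C$ as well, and more: a neighborhood of $tc$ lies in $C$. Since $0\in C$, for $x,y\in H_-$ and small $t>0$ the points $x_t:=(1-t)\cdot 0 + t\,x = tx$ need not be in $C$, so the honest move is to use that the supporting property gives $C\subset H_-$ but $C$ has interior points arbitrarily close to $0$ only if $0\in\overline{\inte C}$, which holds since $C$ is a closed convex set with nonempty interior and $0\in C$. Then for $x,y\in H_-$ near enough to $0$ we can find $x',y'\in\inte C\subset C$ on the segments $[0,x]$-direction perturbed into $\inte C$; applying item (ix) to realize $x\sa y$ from points of $C$ requires those points to be of the special convex-combination form with $x\sa y$, which again forces us through the scaling argument above. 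The main obstacle is precisely this: ensuring the "sliding" in item (ix) of Lemma~\ref{ll} can be done landing inside $C$ rather than merely inside $H_-$; the resolution is the homogeneity of the lattice-like operations (item (v) of Lemma~\ref{ll}) combined with translating so $0\in C\cap\bdr C$, which lets us scale the counterexample down until $x,y\in\inte C$ while $x\sa y$ remains strictly outside the supporting halfspace — the desired contradiction with the $K$-invariance of $C$.
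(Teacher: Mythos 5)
There is a genuine gap at the crux of your argument, namely the claim that you can ``scale the counterexample down until $x,y\in\inte C$.'' First, the violating pair may lie on $H$ itself, and then no positive multiple of $x$ or $y$ need belong to $C$: take $C$ a closed ball tangent to $H$ at $0$, so that $C\cap H=\{0\}$; for $x\in H\setminus\{0\}$ one has $tx\notin C$ for every $t>0$, no matter how small. Second, even for $x,y\in\inte H_-$ the statement ``$tx\in\inte C$ for small $t$'' is exactly the assertion that the tangent cone of $C$ at $0$ is the whole halfspace $H_-$; this is where the hypothesis that $H$ is \emph{tangent} (a differentiability point of $\bdr C$), and not merely supporting, must be used --- your proposal only ever invokes the supporting property, and for a supporting hyperplane at a corner point the claim is false (e.g.\ $C=\{(s,t):t\le-|s|\}$ supported by $\{t=0\}$ at $0$: no ray in $\inte H_-$ off the axis enters $C$ after scaling). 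Your second paragraph acknowledges the difficulty but the proposed ``resolution'' merely restates the problematic claim. Note also that you have in effect replaced the lemma by the stronger assertion that every \emph{supporting} halfspace of a $K$-invariant set is $K$-invariant, which your argument cannot deliver.

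The paper's proof confronts precisely this obstruction: it reduces to a symmetric pair $\pm x\in H$ with $(-x)\sa x\in\inte H_+$, then replaces $\pm tx$ (which need not lie in $C$) by the nearby boundary points $\gamma(\pm t)=\pm tx+o(t)$ obtained by projecting $H$ onto $\bdr C$ along the normal, and uses the Lipschitz bound of item (viii) together with the homogeneity of item (v) of Lemma \ref{ll} to show $\frac1t\,\gamma(-t)\sa\gamma(t)\to(-x)\sa x$, so that for small $t$ the points $\gamma(\pm t)\in C$ have $\gamma(-t)\sa\gamma(t)\notin H_-\supset C$. The $o(t)$ error control is the essential content missing from your proposal. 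Your route could in principle be repaired --- first push $x,y$ into $\inte H_-$ by replacing them with $x-\epsilon u$, $y-\epsilon u$ (item (iv) keeps $x\sa y-\epsilon u\in\inte H_+$ for small $\epsilon$), then argue that at a differentiability point the tangent cone of $C$ equals $H_-$ so that $tx,ty\in C$ for small $t$, and conclude by homogeneity --- but both of these ingredients are absent as written, and the second one is not weaker than the analytic estimate the paper carries out. Incidentally, once $H$ itself is shown invariant, Lemma \ref{halfhyper} is not needed for this lemma; you invoke it in the wrong direction (the lemma's conclusion is about $H$, not $H_-$).
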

\begin{proof}
According to item (ii) of Lemma \ref{latprop}, we can assume that $0\in \bdr C$, that
$H$ is tangent to $C$ at $0$, and that $C\subset H_-$.

We shall prove our claim by contradiction: we assume that $H$ is not
invariant.

Since $H$ is not invariant, there exist some $z,\;w\in H$ such that
$z\sa w$ or $w\su z$ is not in $H$. Suppose that $u$ is the normal
of $H$. From the relation in item (iii) of Lemma \ref{ll} we have then
\[0=\lng u,z+w\rng= \lng u,z\su w \rng+\lng u,w\sa z \rng,\]
whereby it follows that $z\su w$ and $w\sa z$ are in opposite open
half-spaces determined by $H$.

Suppose that $w\sa z\in \inte H_+.$ By taking $x=z-(z+w)/2,$
we have $-x=w-(z+w)/2.$ Then, by our working hypothesis that $0\in H,$
it follows that the line segment $[-x,x]\subset H.$ We can easily check 
that $(-x)\sa x\in \inte H_+.$ Denoting by $B$ the unit ball in $\R^m$,
then there exists some $\delta >0$ such that
\begin{equation}\label{gomb}
(-x)\sa x+\delta B\subset \inte H_+.
\end{equation}

We have the relation
\[[-x,x]=\{tx:\;t\in[-1,1]\}.\]
Next we project $[-x,x]$ in the direction of $u$ onto $\bdr C$.
All the above reasonings are valid when we change $x$
with its positive multiple, hence we can chose $x$ small enough,
so that the above projection to make a sense.

Denote by $\gamma (t)$ the image of $tx$ in $\bdr C$ by  this projection.
Since $H$ is a tangent hyperplane, the segment $[-x,x]$ will be tangent to
$\gamma $ at $t=0,\;\gamma (0)=0,$ $\gamma'(0)$ exists, and $\gamma'(0)=x.$

Since $\gamma$ is differentiable in $t=0$, we have the following
representations around $0$:
\begin{equation}\label{post}
\gamma (t)=tx+\eta (t),\;t>0,
\end{equation}
and
\begin{equation}\label{negt}
\gamma (-t)=-tx+\zeta (-t),\;t>0,
\end{equation}
where
\begin{equation}\label{kiso}
\frac{\eta (t)}{t}\to 0\;\;\textrm{and}\;\; \frac{\zeta (-t)}{t}\to 0,\;\;\textrm{as}\;t\to 0,\;t>0.
\end{equation}

Using item (viii) of Lemma \ref{ll}, as well as the relations (\ref{post}) and (\ref{negt}), we have then
\[\|(-tx)\sa (tx)- \gamma (-t)\sa \gamma (t)\|\leq \frac{3}{2}(\|-tx-\gamma(-t)\|+\|tx-\gamma (t)\|)=
\frac{3}{2}(\|\zeta (-t)\|+\|\eta (t)\|).\]

Dividing the last relation by $t>0$,  and using the relation in item (v) of
Lemma \ref{ll}, we obtain that
\begin{equation}\label{becsles}
\|(-x)\sa x-\frac{1}{t} \gamma(-t)\sa \gamma (t)\|\leq \frac{3}{2}\lf(\lf\|
\frac{\zeta(-t)}{t}\rg\|+\lf\|\frac{\eta (t)}{t}\rg\|\rg).
\end{equation}

Take now $t>0$ small enough in order to have by (\ref{kiso})
\[\frac{3}{2}\lf(\lf\|\frac{\zeta(-t)}{t}\rg\|+\lf\|\frac{\eta (t)}{t}\rg\|\rg)<\delta.\]
For such a $t>0$ we have, by using (\ref{becsles}), that
\[\frac{1}{t}(\gamma (-t)\sa \gamma (t))\in \inte H_+,\]
and thus
\[\gamma (-t)\sa \gamma (t) \in \inte H_+,\]
that is, $\gamma (-t),\;\gamma (t)\in C$, but
\[\gamma (-t)\sa \gamma (t)\notin C,\]
contradicting the invariance of $C$.

The obtained contradiction shows that $H$ must be invariant
with respect to the operations $\su$ and $\sa$, that is, by Lemma \ref{linv},
$H$ must be $K$-invariant.

\end{proof}

\begin{theorem}\label{FOOO}
The closed convex set $C\subset\R^m$ with nonempty interior is $K$-invariant
if and only if it is of the form
\begin{equation*}
C=\cap_{i\in \N} H_-(u_i,a_i),
\end{equation*}
where each hyperplane $H(u_i,a_i)$ is tangent to $C$ and is $K$-invariant.
\end{theorem}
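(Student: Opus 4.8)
The plan is to prove the two implications separately, with the forward direction (every $K$-invariant closed convex body is a countable intersection of $K$-invariant tangent halfspaces) carrying the real content, and the converse being essentially free from the lemmas already established.

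For the converse, suppose $C=\cap_{i\in\N}H_-(u_i,a_i)$ with each $H(u_i,a_i)$ tangent to $C$ and $K$-invariant. By Lemma \ref{halfhyper}, $K$-invariance of the hyperplane $H(u_i,a_i)$ is equivalent to $K$-invariance of the halfspace $H_-(u_i,a_i)$; then by Lemma \ref{latprop}(i) the intersection $\cap_{i\in\N}H_-(u_i,a_i)=C$ is $K$-invariant. So this direction is a two-line argument citing the earlier results, and I would write it first to dispatch it.

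For the forward direction, assume $C\subset\R^m$ is a $K$-invariant closed convex set with nonempty interior. The idea is the standard convex-geometry fact that any closed convex set with nonempty interior equals the intersection of its supporting (tangent) halfspaces, refined to a countable subfamily. Concretely, first I would recall that $C=\cap\{H_-(u,a):H(u,a)\text{ supports }C\}$: for any point $p\notin C$, separation gives a hyperplane strictly separating $p$ from $C$, and by sliding it we get a supporting hyperplane of $C$ whose associated halfspace excludes $p$; since $\inte C\neq\emptyset$, every supporting hyperplane is tangent in the sense used here. By Lemma \ref{tanginvar}, every hyperplane tangent to the $K$-invariant body $C$ is automatically $K$-invariant — this is the crucial input that makes the tangent halfspaces in the representation $K$-invariant "for free." Then to pass from the full (possibly uncountable) family of tangent halfspaces to a countable one, I would use separability of $\R^m$: $\bdr C$ is separable, pick a countable dense set $\{p_j\}\subset\bdr C$, and at each $p_j$ choose one tangent hyperplane $H(u_j,a_j)$; a routine closure/limit argument (using that supporting halfspaces vary upper-semicontinuously and $C$ is the intersection of supporting halfspaces at a dense set of boundary points) shows $C=\cap_{j\in\N}H_-(u_j,a_j)$. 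Each such $H(u_j,a_j)$ is tangent to $C$ and, by Lemma \ref{tanginvar}, $K$-invariant, giving the claimed form.

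The main obstacle is the countability reduction: one must be careful that choosing tangent hyperplanes only at a countable dense subset of $\bdr C$ still recovers all of $C$ as the intersection. The clean way is to argue that if $p\notin C$, strict separation yields a supporting halfspace $H_-(u,a)\not\ni p$ touching $C$ at some $q\in\bdr C$; then approximate $q$ by $p_j\to q$ from the dense set, and use that the tangent halfspaces at $p_j$ converge (in normal direction) to a tangent halfspace at $q$, so for $j$ large the halfspace $H_-(u_j,a_j)$ still excludes $p$ (here one uses that $p$ is at positive distance from $C$). This limiting step is where continuity of the support function and the nonempty-interior hypothesis (which prevents degenerate supporting hyperplanes and guarantees tangency in the paper's sense) are really used; everything else is bookkeeping on top of Lemmas \ref{halfhyper}, \ref{latprop}, and \ref{tanginvar}.
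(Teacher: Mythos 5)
Your converse direction coincides with the paper's (Lemma \ref{halfhyper} plus Lemma \ref{latprop}(i)) and is fine, and your overall strategy for the forward direction --- a countable tangent-halfspace representation combined with Lemma \ref{tanginvar} --- is also the paper's. The gap is in how you build the countable family. You take an arbitrary countable dense subset $\{p_j\}$ of $\bdr C$, choose \emph{some} supporting hyperplane at each $p_j$, and assert that ``since $\inte C\neq\emptyset$, every supporting hyperplane is tangent in the sense used here.'' That is not the sense Lemma \ref{tanginvar} uses: its proof parametrizes $\bdr C$ near the contact point by a curve $\gamma$ and explicitly requires that $\gamma'(0)$ exist, i.e.\ that $\bdr C$ be differentiable at the point of contact. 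A supporting hyperplane at a non-smooth boundary point is therefore outside the scope of that lemma, and your appeal to it for arbitrarily chosen supporting hyperplanes is unjustified. The paper avoids this by invoking Rockafellar's Theorem 25.5: the differentiability points are dense in $\bdr C$, and the countable dense set $\{a_i\}$ is selected \emph{inside} that set, so every $H(u_i,a_i)$ is genuinely tangent and Lemma \ref{tanginvar} applies.

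The limiting argument you sketch for $C=\cap_j H_-(u_j,a_j)$ also fails as stated. If $p\notin C$ and $q\in\bdr C$ is a point at which some supporting halfspace excludes $p$, and $p_j\to q$, then along a subsequence the chosen normals $u_j$ converge to \emph{some} normal of $C$ at $q$; but at a non-smooth $q$ the normal cone is not a single ray, and the limit normal need not define a halfspace excluding $p$. For instance, with $C=\{(x,y):x\le 0,\ y\le 0\}$, $q=(0,0)$ and $p=(0,1)$, the supporting halfspace $\{x\le 0\}$ at $q$ contains $p$. The desired identity is nevertheless true, but it needs a different argument --- e.g.\ each $p_j$ lies on the boundary of $\tilde C:=\cap_j H_-(u_j,a_j)$, hence $\bdr C\subset\bdr\tilde C$, which forces $\tilde C=C$ for two nested convex bodies --- or, as in the paper, one restricts to differentiability points, where the normal is unique and varies continuously, so the limit normal is forced and the ``standard convex geometric reasoning'' goes through. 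Both defects are repaired simultaneously by following the paper and taking the countable dense set among the smooth boundary points.
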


\begin{proof}
It is known (see e.g. \cite{Rockafellar1970}, Theorem 25.5) that if $C\subset \R^m$ is a closed convex set
with nonempty interior, then $\bdr C$ contains a dense subset
of points where this surface is differentiable. Since the topology
of $\bdr C$ possesses a countable basis, we can select from this dense set a countable
dense set $\{a_i:\;i\in \N\}\subset \bdr C$ such that
there exist the tangent hyperplanes $H(u_i,a_i)$ to $C$ and
$C\subset H_-(u_i,a_i),\; i\in \N.$ Since the set $\{a_i,\;i\in \N\}$ is dense
in $\bdr C$, a standard convex geometric reasoning shows that in fact
\begin{equation}\label{vegsob}
C=\cap_{i\in \N} H_-(u_i,a_i).
\end{equation}
  
Now, if $C$ is $K$-invariant, then so is $H(u_i,a_i),\;i\in \N$	
by Lemma \ref{tanginvar}.
Hence, the necessity of the condition in the theorem is proved.

Conversely, if we have the representation (\ref{vegsob}) with the
hyperplanes $H(u_i,a_i),\;i\in \N$ $K$-invariant, 
 then, by Lemma \ref{halfhyper}, the halfspaces $H_-(u_i,a_i),\;i\in \N$ are also $K$-invariant.
Then, by using item (i) of Lemma \ref{latprop} and the representation (\ref{vegsob}), we see 
that $C$ is $K$-invariant, and the sufficiency of
the theorem is proved.

\end{proof}

By gathering and comparing the results of Theorem \ref{ISOINV}  
and Theorem \ref{FOOO}, we obtain
the following corollary:

\begin{corollary}\label{POLYH}
Let $C$ be a closed convex set with nonempty interior.
Then, the following assertions are equivalent:
 \begin{enumerate}
\item [\emph{(i)}] $C$ is a $K$-invariant set, i. e., it is
invariant with respect to the operations $\sa$, $\su$, $\sa_*,$ and $\su_*$ ;
\item [\emph{(ii)}] The projection $P_C$ is $K$-isotone and $K^*$-isotone;
\item [\emph{(iii)}] The set $C$ can be represented by
\begin{equation}\label{redu}
C=\cap_{i\in \N} H_-(u_i,a_i),
\end{equation}
where each hyperplane $H(u_i,a_i),\;i\in \N$ is tangent to $C$ and is $K$-invariant;
\item [\emph{(iv)}] $C$ can be represented by (\ref{redu}),
where each hyperplane $H(u_i,a_i),\;i\in \N$ is tangent to $C$ and is a 
$K$-isotone projection and a $K^*$-isotone
projection set.
\end{enumerate}
\end{corollary}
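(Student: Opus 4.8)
The plan is to assemble Corollary \ref{POLYH} entirely from results already proved, so the proof is a chain of equivalences rather than a fresh argument. First I would establish (i)$\Leftrightarrow$(ii). By definition, $K$-invariance of $C$ means invariance with respect to all four operations $\sa,\su,\sa_*,\su_*$; Theorem \ref{ISOINV} gives that invariance with respect to $\sa$ and $\su$ is equivalent to $P_C$ being $K$-isotone. The dual half follows by swapping the roles of $K$ and $K^*$ (note $K^{**}=K$ for a closed convex cone), which turns $\sa,\su$ into $\sa_*,\su_*$ and ``$K$-isotone'' into ``$K^*$-isotone''. Since, by Lemma \ref{linv}, invariance with respect to $\sa$ and $\su$ already forces invariance with respect to all four operations, the single statement ``$P_C$ is $K$-isotone'' is in fact equivalent to ``$P_C$ is $K$-isotone and $K^*$-isotone'', and both are equivalent to $K$-invariance. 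This is the only point where one must be slightly careful, so I regard it as the main (mild) obstacle: articulating cleanly why Theorem \ref{ISOINV}, which on its face only mentions $\sa,\su$ and $K$-isotonicity, yields the two-sided statement (ii) — the bridge is precisely Lemma \ref{linv}.

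Next I would record (i)$\Leftrightarrow$(iii), which is exactly the content of Theorem \ref{FOOO}: a closed convex set with nonempty interior is $K$-invariant if and only if it is an intersection of countably many closed halfspaces whose bounding hyperplanes are tangent to $C$ and $K$-invariant. No new work is needed here beyond citing the theorem.

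Finally I would show (iii)$\Leftrightarrow$(iv). The two statements differ only in how the hyperplanes $H(u_i,a_i)$ are described: ``$K$-invariant'' in (iii) versus ``$K$-isotone projection and $K^*$-isotone projection set'' in (iv). But a hyperplane is itself a closed convex set with nonempty interior (in its affine hull — or, more to the point, the equivalence (i)$\Leftrightarrow$(ii) just established applies to any closed convex set for which the projection argument of Theorem \ref{ISOINV} runs, and hyperplanes qualify), so applying the already-proved equivalence (i)$\Leftrightarrow$(ii) to each $H(u_i,a_i)$ in place of $C$ converts ``$H(u_i,a_i)$ is $K$-invariant'' into ``$P_{H(u_i,a_i)}$ is $K$-isotone and $K^*$-isotone'', i.e.\ $H(u_i,a_i)$ is a $K$-isotone and $K^*$-isotone projection set. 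This is an item-by-item translation, so (iii) and (iv) describe the same family of sets $C$.

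Putting the three links together — (i)$\Leftrightarrow$(ii), (i)$\Leftrightarrow$(iii), (iii)$\Leftrightarrow$(iv) — gives the equivalence of all four assertions. I expect the write-up to be short: essentially a paragraph invoking Theorem \ref{ISOINV} together with Lemma \ref{linv} and the self-duality relation $K^{**}=K$ for (i)$\Leftrightarrow$(ii), a one-line appeal to Theorem \ref{FOOO} for (i)$\Leftrightarrow$(iii), and a one-line ``apply (i)$\Leftrightarrow$(ii) to each hyperplane'' for (iii)$\Leftrightarrow$(iv). The only thing to watch is that Theorem \ref{ISOINV} as stated does not require $C$ to have nonempty interior, so it legitimately applies to the hyperplanes appearing in the representation; I would make that explicit.
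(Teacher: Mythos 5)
Your proposal is correct and follows essentially the same route as the paper: (i)$\Leftrightarrow$(iii) by Theorem \ref{FOOO}, (i)$\Leftrightarrow$(ii) by Theorem \ref{ISOINV}, and (iii)$\Leftrightarrow$(iv) by applying Theorem \ref{ISOINV} to each hyperplane (which, as you note, is legitimate since that theorem needs no interiority hypothesis). Your extra care in deriving the two-sided statement (ii) from the one-sided Theorem \ref{ISOINV} via Lemma \ref{linv} and $K^{**}=K$ is a point the paper glosses over, and is a welcome clarification rather than a deviation.
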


\begin{proof}

From Theorem \ref{FOOO} we have the equivalence 
\[\textrm{(i)} \Leftrightarrow \textrm{(iii)}.\]

From Theorem \ref{ISOINV} we have the equivalence
\[\textrm{(i)}\Leftrightarrow \textrm{(ii)}.\]

We have by Theorem \ref{ISOINV} that a hyperplane $H(u_i,a_i) (i\in \N)$ is
$K$-isotone if and only if it is invariant. Hence, we have the equivalence
\[\textrm{(iii)}\Leftrightarrow \textrm{(iv)}.\]

\end{proof}
\section{The case of the simplicial cone}\label{The}

If the closed convex cone $K\subset \R^m$ induces a latticial ordering $\leq$ in
$\R^m$, then it is called \emph{simplicial}. The origin of this term relies
in Youdine's theorem \cite{Youdine1939}, which says that in this case
\begin{equation}\label{youdine}
K=\cone \{e_1,...,e_m\}=\{t^1e_1+...+t^me_m,\;t^i\in \R_+,\;i=1,...,m\},
\end{equation}
where the vectors $e_1,...,e_m$ form a basis of $\R^m$.

If $K$ is a simplicial cone, the set $M\subset \R^m$ is called a \emph{sublattice of}
$(\R^m,K)$ if from $x,\;y\in M$ it follows that $x\vee y,\;x\wedge y\in M.$

The cone $K\subset \R^m$ is called \emph{subdual} if $K\subset K^*$.

The following lemma does not use the representation (\ref{youdine}) of
a simplicial cone $K$,  but only the latticiality of $\leq$. Again we put $\le$ for $\le_K$.

\begin{lemma}\label{NishOk}
	If the closed convex set $C\subset \R^m$ admits a $K$-isotone projection $P_C$ with
	respect to a subdual simplicial cone $K\subset\R^m$, then $C$ is a sublattice 
	of the lattice $(\R^m,K)$. 
\end{lemma}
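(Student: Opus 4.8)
The plan is to show that for any $x, y \in C$, both $x \vee y$ and $x \wedge y$ lie in $C$, where $\vee, \wedge$ are the lattice operations of $(\R^m, K)$. Fix $x, y \in C$ and set $u = x \wedge y$, the greatest lower bound with respect to $\le$. The key observation to exploit is that $u \le x$ and $u \le y$, so by the $K$-isotonicity of $P_C$ we get $P_C u \le P_C x = x$ and $P_C u \le P_C y = y$ (using $x, y \in C$, hence $P_C x = x$, $P_C y = y$). Since $u = x \wedge y$ is the \emph{greatest} lower bound of $\{x, y\}$, it follows that $P_C u \le u$. The goal is then to upgrade this to $P_C u = u$, i.e., $u \in C$; the symmetric argument with $x \vee y$ and reversed inequalities then finishes the proof.

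The second step is to convert $P_C u \le u$ into an identity. Here is where subduality ($K \subset K^*$) enters: from $P_C u \le u$ we have $u - P_C u \in K \subset K^*$. On the other hand, the characterization \eqref{charac} of the projection applied with the point $y' \in C$ (either $x$ or $y$ — say $x$, which lies in $C$ and satisfies $u \le x$) gives $\lng P_C u - u, P_C u - x\rng \le 0$. I would combine $u - P_C u \in K^*$ with the inequalities $x - u \in K$, $y - u \in K$ to control the sign of $\lng u - P_C u, \text{(something)}\rng$. More precisely: write $\lng P_C u - u, P_C u - x \rng = \lng P_C u - u, P_C u - u\rng + \lng P_C u - u, u - x\rng = -\|u - P_C u\|^2 - \lng u - P_C u, x - u\rng$. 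Since $u - P_C u \in K^* $ and $x - u \in K$, the inner product $\lng u - P_C u, x - u\rng \ge 0$, so the whole expression is $\le -\|u - P_C u\|^2 \le 0$ — which is consistent but not yet a contradiction. To get a strict conclusion I would instead mimic the contradiction argument in the proof of Theorem \ref{ISOINV}: assume $u \notin C$, so $\|x - P_C u\| \le \|x - w\|$ for all $w \in C$, yet since $u$ is a lower bound one expects the "lattice point" $u$ to be at least as close to $x$ (or to some auxiliary point in $x - K$) as $P_C u$ is, forcing $u = P_C u$.

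The cleanest route, which I expect to be the main technical step, is to apply the already-proved Theorem \ref{ISOINV}: $K$-isotonicity of $P_C$ is equivalent to $C$ being $K$-invariant, i.e. invariant under $\sa$ and $\su$. So it suffices to show that for a subdual simplicial cone, $x \sa y = x \wedge y$ and $x \su y = x \vee y$, or at least that $K$-invariance implies sublatticiality. Now $x \sa y = x - P_K(x - y)$, and I claim $P_K(x-y) = (x - y) \vee 0$ fails in general — but subduality should give $x \wedge y \le x \sa y \le x$ and $x \sa y \le_* y$; chasing the latticial structure (using that $\le$ is a lattice order and $u = x \wedge y$ satisfies $x - u, y - u \in K \subset K^*$) should pin down that the $\sa$-closure of $\{x,y\}$ inside $C$ drags $x \wedge y$ into $C$ via a limiting/convexity argument like the one in Lemma \ref{tanginvar}. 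The main obstacle is precisely this comparison between $\sa$ and $\wedge$: for a general (non-self-dual) simplicial cone these need not coincide, and subduality is exactly the hypothesis that lets one sandwich $\wedge$ between iterated applications of $\sa$, so I would spend the bulk of the argument establishing that $x \sa y$ lies between $x \wedge y$ and $x$ in the order $\le$ and then iterate/take convex combinations to reach $x \wedge y \in C$.
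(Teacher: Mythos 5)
Your first step is exactly the paper's: from $u:=x\wedge y\le x$, $u\le y$ and isotonicity you get that $P_Cu$ is a lower bound of $\{x,y\}$, hence $P_Cu\le u$, and the whole lemma reduces to upgrading this to $P_Cu=u$. The problem is that you then abandon the direct computation because of a sign error. You wrote $\lng P_Cu-u,P_Cu-u\rng=-\|u-P_Cu\|^2$; of course $\lng P_Cu-u,P_Cu-u\rng=+\|P_Cu-u\|^2$. With the correct sign your own decomposition closes the proof immediately: the characterization \eqref{charac} with $x\in C$ gives
\[
0\ \ge\ \lng P_Cu-u,\,P_Cu-x\rng\ =\ \|P_Cu-u\|^2+\lng u-P_Cu,\,x-u\rng\ \ge\ \|P_Cu-u\|^2\ \ge\ 0,
\]
where the middle inequality uses precisely the subduality you identified ($u-P_Cu\in K\subset K^*$ and $x-u\in K$). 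Hence $P_Cu=u$ and $u\in C$; the dual argument handles $x\vee y$. This is, up to regrouping terms, the paper's proof (the paper runs it for $x\vee y$, first forcing the inner product to vanish and then applying subduality a second time to the difference of the two $K$-vectors).

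The fallback route in your last paragraph does not work as a substitute. Invoking Theorem \ref{ISOINV} converts isotonicity into invariance under $\sa$ and $\su$, but, as you concede, $x\sa y\ne x\wedge y$ for a general subdual simplicial cone, and no actual mechanism is given for "sandwiching $\wedge$ between iterated applications of $\sa$" or for the limiting argument that is supposed to drag $x\wedge y$ into $C$. The claim that subduality is "exactly the hypothesis" enabling such a sandwich is asserted, not proved, and I see no reason it should hold; in any case the paper never needs Theorem \ref{ISOINV} here. So: keep your first two paragraphs, fix the sign, delete the third.
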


\begin{proof}
	Suppose that $P_C$ is $K$-isotone and take $x,y\in C$.
	Let us see that $x\vee y\in C.$ 

	From the characterization (\ref{charac}) of the projection we have
	\begin{equation}\label{NishOkf}
		\lng P_C(x\vee y)-x\vee y,P_C(x\vee y)-y\rng \leq 0.
	\end{equation}
	Since $x\leq x\vee y$ and $P_C$ is $K$-isotone, it follows that $x=P_Cx\leq P_C(x\vee y)$. 
	Similarly, $y\leq P_C(x\vee y)$ and hence $x\vee y\leq P_C(x\vee y).$ We have also
	\begin{equation}\label{NO}
		0\leq P_C(x\vee y)-x\vee y\leq P_C(x\vee y)-y. 
	\end{equation}
	Thus, the two terms in the scalar product (\ref{NishOkf}) are in $K$
	and since $K$ is subdual, we must have the equality:
\begin{equation}\label{NishOkff}
\lng P_C(x\vee y)-x\vee y,P_C(x\vee y)-y\rng =0.
\end{equation}
 
By using again the subduality of $K$, the relation (\ref{NO}), as well as (\ref{NishOkff}), 
it follows that
\[0\leq \lng P_C(x\vee y)-x\vee y, (P_C(x\vee y)-y)- (P_C(x\vee y)-x\vee y)\rng=
-\|P_C(x\vee y)-x\vee y\|^2,\]
thus we must have
\[P_C(x\vee y)=x\vee y,\]
and since $C$ is closed, $x\vee y\in C.$

Similar reasonings show that $x\wedge y\in C.$
\end{proof}

The next corollary follows from Theorem \ref{ISOINV} and Lemma \ref{NishOk}.

\begin{corollary}\label{cis}
	Let $K\subset\R^m$ be a subdual simplicial cone. If the closed convex set 
	$C\subset \R^m$ is $K$-invariant, then $C$ is a sublattice of the lattice $(\R^m,K)$.
\end{corollary}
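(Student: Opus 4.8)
The plan is to derive the conclusion by chaining the two results cited in the statement, after checking that their hypotheses match up. First I would observe that a simplicial cone $K$ is a closed convex cone: by Youdine's theorem it is the image of $\R^m_+$ under the invertible linear map sending the standard basis to $e_1,\dots,e_m$ (equivalently, it is the finitely generated cone \eqref{youdine}), hence closed and convex. Thus Theorem \ref{ISOINV} is applicable to $K$, and the hypothesis that the closed convex set $C$ is $K$-invariant yields at once that $P_C$ is $K$-isotone.

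Next I would invoke Lemma \ref{NishOk}. Its hypotheses are now all in place: $K$ is a subdual simplicial cone by assumption, and $C$ is a closed convex set admitting a $K$-isotone projection $P_C$ by the previous step. The lemma then gives directly that $C$ is a sublattice of the lattice $(\R^m, K)$, which is precisely the assertion to be proved, so no further argument is required.

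Because the corollary is a purely formal consequence of two already-established facts, I do not expect any genuine obstacle here; the mathematical content is entirely contained in Theorem \ref{ISOINV} and Lemma \ref{NishOk}. The only point worth bearing in mind is where each hypothesis is really used: latticiality of $\le_K$ is what makes $x\vee y$ and $x\wedge y$ available in the first place, and subduality ($K\subset K^*$) is what forces the two $K$-valued vectors appearing in the characterization \eqref{charac} of $P_C(x\vee y)$ to be orthogonal --- and hence to coincide at the relevant point --- in the proof of Lemma \ref{NishOk}. So the subduality assumption in the corollary genuinely enters the argument and cannot simply be dropped.
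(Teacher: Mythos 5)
Your proposal is correct and follows exactly the paper's own route: the paper likewise obtains the corollary by combining Theorem \ref{ISOINV} (the $K$-invariance of $C$ gives $K$-isotonicity of $P_C$) with Lemma \ref{NishOk}. Your additional remarks on closedness of a simplicial cone and on where subduality enters are accurate but not needed beyond what the two cited results already supply.
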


\begin{remark}
	Since the $K$-invariance of a set is equivalent to its $K^*$-invariance, by replacing 
	$K$ with $K^*$, similar results to Lemma \ref{NishOk} and Corollary \ref{cis} hold 
	for superdual cones too.
\end{remark}

It is well known (see e. g. \cite{NemethNemeth2009}) that if the simplicial cone $K$ is represented by (\ref{youdine}),
then we have for its dual the representation
\begin{equation}\label{youdinedual}
K^*=\cone \{u_1,...,u_m\}=\{t^1u_1+...+t^mu_m\;;\;t^i\in \R_+,\;i=1,...,m\},
\end{equation}
where the vectors $u_1,...,u_m$ are obtained from the relations
\begin{equation}\label{biort}
\lng e_i,u_j\rng=\delta_{ij},\; \textrm{for any}\;i,j\in\{1,\dots,m\}, 
\end{equation}
where $\delta_{ij}$ is the Kronecker delta.

\begin{lemma}\label{foo}
	Let $K\subset\R^m$ be a simplicial cone given by (\ref{youdine}) such that its dual $K^*$
	is of form (\ref{youdinedual}) with the vectors $u_1,\dots,u_m$ satisfying (\ref{biort}).
	The hyperplane $H$ through $0$ with the unit 
	normal vector $a$ is $K$-invariant if and only if 
	\begin{equation}\label{ebio}
		\lng a,e_i\rng\lng a,u_j\rng\le\delta_{ij},
	\end{equation} 
	for any $i,j\in\{1,\dots,m\}$. If $a=\alpha^1e_1+\dots+\alpha^me_m=\beta^1u_1+\dots+
	\beta^mu_m$, then $\alpha^i=\lng a,u_i\rng$ and $\beta^j=\lng a,e_j\rng$, and hence
	the system of inequalities \eqref{ebio} is equivalent to
	\begin{equation}\label{ebio2}
		\alpha^i\beta^j\le\delta_{ij},
	\end{equation}
\end{lemma}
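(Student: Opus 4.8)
The plan is to translate the $K$-invariance of the hyperplane $H=H(a,0)$ into a concrete condition on the vectors $x\sa y$ for $x,y\in H$, then to carry out the projection $P_K$ explicitly using the biorthogonal bases $\{e_i\}$ and $\{u_j\}$. First I would recall, from Lemma~\ref{l0}(i), that $x\sa y=x-P_K(x-y)$, so $H$ is $\sa$-invariant precisely when $\lng a,x-P_K(x-y)\rng=0$ for all $x,y\in H$; writing $z=x-y$ this becomes $\lng a,P_K z\rng=\lng a,z\rng$ for every $z$ in the subspace $H$ (note $x,y\in H\Rightarrow z\in H$, and conversely any $z\in H$ arises this way). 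By Lemma~\ref{linv} it suffices to treat $\sa$ and $\su$, and by item~(vii) of Lemma~\ref{ll} the $\su$-condition is equivalent to the $\sa$-condition, so the whole of $K$-invariance reduces to: $\lng a,P_Kz\rng=\lng a,z\rng$ for all $z\in H$, equivalently (since $z=P_Kz-P_{K^*}(-z)$ by Lemma~\ref{lm}) $\lng a,P_{K^*}(-z)\rng=0$ for all $z\in H$.

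Next I would make the projection onto $K$ explicit. Using $K=\cone\{e_1,\dots,e_m\}$ and the dual basis relations $\lng e_i,u_j\rng=\delta_{ij}$, any $w\in\R^m$ has the two expansions $w=\sum_i\lng w,u_i\rng e_i=\sum_j\lng w,e_j\rng u_j$. The key combinatorial fact is that for a simplicial cone the projection $P_K$ need not be ``coordinatewise'' in either basis in general, so instead of a clean closed form I would argue via the characterization (\ref{charac}) of $P_K$: for fixed $z$, the point $p=P_Kz$ is the unique point of $K$ with $\lng p-z,p-q\rng\le0$ for all $q\in K$. The cleanest route is to test invariance on the edges of $K$ and its dual. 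Concretely, $H$ is $K$-invariant iff for every pair of generators we get a sign-condition; I would show that the inequalities (\ref{ebio}) are exactly the conditions $\lng a,e_i\rng\lng a,u_j\rng\le\delta_{ij}$ that guarantee $\lng a,P_Kz\rng=\lng a,z\rng$ on $H$, by taking $z$ of the special form $z=s\,u_j-t\,e_i$ (which lies in $H$ after scaling $s,t$ appropriately so that $\lng a,z\rng=0$, using that $a$ has nontrivial pairings) and computing $P_Kz$ for such $z$ directly: for these two-generator configurations the projection can be written down, and the condition $\lng a,P_Kz\rng=\lng a,z\rng=0$ collapses to $\alpha^i\beta^j\le\delta_{ij}$ with $\alpha^i=\lng a,u_i\rng$, $\beta^j=\lng a,e_j\rng$.

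The final bookkeeping step is the identification $\alpha^i=\lng a,u_i\rng$ and $\beta^j=\lng a,e_j\rng$: this is immediate from the biorthogonality (\ref{biort}) — pairing $a=\sum_k\alpha^ke_k$ with $u_i$ gives $\lng a,u_i\rng=\sum_k\alpha^k\lng e_k,u_i\rng=\alpha^i$, and symmetrically for $\beta^j$ — so (\ref{ebio}) and (\ref{ebio2}) are literally the same system. I expect the main obstacle to be the converse (sufficiency) direction: showing that the finite system (\ref{ebio}), indexed only by pairs of \emph{generators}, already forces $\lng a,P_Kz\rng=\lng a,z\rng$ for \emph{all} $z\in H$, not just the two-generator test vectors. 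To handle this I would partition $\R^m$ into the simplicial ``faces regions'' according to which generators $e_i$ carry positive weight in $P_Kz$ (the normal-cone decomposition of $K$), observe that on each such region $P_Kz$ depends affinely on $z$, and check that on each region the functional $z\mapsto\lng a,P_Kz-z\rng$ — which vanishes on the relevant coordinate subspace — has the right sign on $H$ exactly when the generator inequalities (\ref{ebio}) hold; a density/continuity argument then extends the equality from the interiors of the regions to all of $H$. The necessity direction is comparatively easy, since one only needs to exhibit, for each violated inequality, a single pair $x,y\in H$ with $x\sa y\notin H$, which the two-generator test vectors provide.
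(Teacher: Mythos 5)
Your reduction of the invariance of $H$ to the single condition $\lng a,P_Kz\rng=\lng a,z\rng$ for all $z\in H$ is correct, but from that point on the argument has two genuine gaps. First, the sufficiency direction --- that the finite system (\ref{ebio}) forces $\lng a,P_Kz\rng=0$ for \emph{every} $z\in H$ --- is only gestured at via a partition into face regions and a ``density/continuity argument''; the extreme rays of the intersection of each face region with $H$ are not your two-generator test vectors, and no actual argument is given that the sign of $z\mapsto\lng a,P_Kz-z\rng$ on each region is controlled by the generator inequalities. Second, even for necessity your test vectors only deliver the off-diagonal inequalities: the identity $P_K(te_i-su_j)=te_i$ (for $t,s\ge0$) holds precisely because $\lng u_j,e_i\rng=0$, i.e.\ only when $i\ne j$, and such a vector lies in $H$ with $t,s>0$ only when $\lng a,e_i\rng$ and $\lng a,u_j\rng$ have the same nonzero sign --- which recovers $\lng a,e_i\rng\lng a,u_j\rng\le0=\delta_{ij}$ for $i\ne j$ but says nothing about the diagonal constraints $\lng a,e_i\rng\lng a,u_i\rng\le1$, which are part of (\ref{ebio}) and are not automatic for a general (non-orthogonal) simplicial cone.

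The paper avoids all of this by invoking Theorem \ref{ISOINV} first: $H$ is $K$-invariant if and only if $P_H$ is $K$-isotone, and since $P_H$ is \emph{linear} this is simply $P_H(K)\subset K$, i.e.\ $P_He_i\in K$ for each generator $e_i$. With $P_He_i=e_i-\lng a,e_i\rng a$ and testing membership in $K=\{w:\lng w,u_j\rng\ge0,\ j=1,\dots,m\}$, one reads off (\ref{ebio}), diagonal terms included, in one line and in both directions at once; the bookkeeping $\alpha^i=\lng a,u_i\rng$, $\beta^j=\lng a,e_j\rng$ you do handle correctly. If you want to salvage your direct approach you would need, at minimum, an explicit description of $P_K$ on each face region and a verification on all extreme rays of each region intersected with $H$; the route through Theorem \ref{ISOINV} and the linearity of $P_H$ is the idea your proposal is missing.
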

\begin{proof}

	By Theorem \ref{ISOINV} it is enough to prove that $P_H$ is isotone if and only if
	the conditions of the lemma hold. 

	Since $P_H$ is linear, in order to characterize the hyperplane
	$H$ with the property that $x\leq y$ implies $P_Hx\leq P_Hy$, it is
	sufficient to give necessary and sufficient conditions on
	the unit vector $a$ such that
	\begin{equation}\label{charH}
		0\le P_H e_i,\;\;i=1,...,m.
	\end{equation}

	Since $a$ is a unit vector, the conditions (\ref{charH})
	can be written in the form:
	\begin{equation}\label{charu}
		0\le P_He_i =e_i-\lng a,e_i\rng a
		\;i=1,...,m.
	\end{equation}
These conditions are equivalent to
\begin{equation}\label{nonpositive}
	\lng e_i-\lng a,e_i\rng a,u_j\rng\ge 0,
\end{equation}
for all $i,j\in\{1,\dots,m\}$, which is equivalent to \eqref{ebio}.
The relations $\alpha^i=\lng a,u_i\rng$ and $\beta^j=\lng a,e_j\rng$ follow easily from 
formulas \eqref{biort}.

\end{proof}

\begin{corollary}\label{orthalo}
Let $e_1,...,e_m$ be an orthonormal system of vectors in $\R^m$ and consider
the system engendered by it in $\R^m$. Then, the hyperplane $H$ through $0$
with the unit normal $a=(a^1,...,a^m)$ is $K=\R_m^+$-invariant if and only if
and only if
\begin{equation}\label{parteset}
a^ia^j\leq 0 \;\textrm{whenever}\; i\not= j.
\end{equation}
\end{corollary}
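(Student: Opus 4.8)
The plan is to derive Corollary \ref{orthalo} as the special case of Lemma \ref{foo} in which the simplicial cone $K$ is the nonnegative orthant $\R^m_+$ relative to the orthonormal system $e_1,\dots,e_m$. The key observation is that when $e_1,\dots,e_m$ is an orthonormal basis, the biorthogonal system $u_1,\dots,u_m$ of \eqref{biort} coincides with $e_1,\dots,e_m$ themselves, since $\lng e_i,e_j\rng=\delta_{ij}$. Consequently $K=K^*=\cone\{e_1,\dots,e_m\}$ is self-dual, and in the expansion $a=\alpha^1e_1+\dots+\alpha^me_m=\beta^1u_1+\dots+\beta^mu_m$ we get $\alpha^i=\beta^i=\lng a,e_i\rng=a^i$, the $i$-th coordinate of $a$ in the orthonormal system.

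First I would invoke Lemma \ref{foo} with this choice of $K$: the hyperplane $H$ through $0$ with unit normal $a$ is $K$-invariant if and only if the system \eqref{ebio2}, namely $\alpha^i\beta^j\le\delta_{ij}$, holds for all $i,j\in\{1,\dots,m\}$. Substituting $\alpha^i=\beta^i=a^i$ turns this into $a^ia^j\le\delta_{ij}$ for all $i,j$. Then I would split this into the two cases $i=j$ and $i\ne j$: for $i\ne j$ the condition reads $a^ia^j\le 0$, which is exactly \eqref{parteset}; for $i=j$ it reads $(a^i)^2\le 1$, which is automatic because $a$ is a unit vector (so each $|a^i|\le 1$, in fact $\sum_i (a^i)^2=1$). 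Hence the full system \eqref{ebio2} reduces precisely to the off-diagonal inequalities \eqref{parteset}, giving the stated equivalence.

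There is essentially no obstacle here; the only point requiring a line of care is verifying that the diagonal inequalities in \eqref{ebio2} are vacuous under the unit-normal hypothesis, and that the biorthogonal dual of an orthonormal basis is the basis itself — both are immediate. One could alternatively argue directly from \eqref{charu}: $P_He_i=e_i-\lng a,e_i\rng a=e_i-a^ia$, and requiring $0\le P_He_i$ coordinatewise means $\delta_{ij}-a^ia^j\ge 0$ for all $j$, which for $j\ne i$ is \eqref{parteset} and for $j=i$ is $(a^i)^2\le 1$, again automatic. Either route yields the corollary in a few lines.
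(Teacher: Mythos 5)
Your proof is correct and follows exactly the paper's own route: specialize Lemma \ref{foo} to the orthonormal case, where $u_i=e_i$ and $\alpha^i=\beta^i=a^i$, so that \eqref{ebio2} becomes $a^ia^j\le\delta_{ij}$, and note that the diagonal inequalities $(a^i)^2\le 1$ are automatic from $\|a\|=1$, leaving precisely \eqref{parteset}. No gaps; this matches the paper's argument line for line.
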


\begin{proof}
Using the notation in Lemma \ref{foo}, we have in this case $u_i=e_i,\;i=1,...,m$,
and $\alpha^i=\beta^i=a^i,\;i=1,...,m$. From the condition $\|a\|=1$ we have
that $|a^i|\leq 1,\;i=1,...,m$ and hence the conditions which corresponds to
(\ref{ebio2}) for $i=j$, that is, $a^ia^i\leq 1,\;i=1,...,m$ are automatically satisfied. The remaining
conditions are exactly those in (\ref{parteset}).
\end{proof}

This corollary is in fact nothing else as Lemma 14 in \cite{NemethNemeth2012a}.

\begin{corollary}\label{cfoo}
	Let $K\subset\R^m$ be a simplicial cone given by (\ref{youdine}) such that its dual $K^*$
	is of form (\ref{youdinedual}) with the vectors $u_1,\dots,u_m$ satisfying (\ref{biort}).
	The existence of a $K$-invariant hyperplane $H$ through $0$ with 
	unit normal vector $a$ is equivalent to one of the following situations:
	\begin{enumerate}
		\item[(i)] The vector $a$ belongs to $\cone\{e_p,-e_q\}\cap\cone\{u_p,-u_q\}$,
			for some $p,q\in\{1,\dots,m\}$, $p\ne q$.
		\item[(ii)] The inequality $\lng e_p,e_i\rng\le0$ holds for some 
			$p\in\{1,\dots,m\}$ and any 
			$i\in\{1,\dots,m\}$ with $i\ne p$, and $a=\pm e_p/\|e_p\|$.
		\item[(iii)] The inequality $\lng u_p,u_j\rng\le0$ holds for some 
			$p\in\{1,\dots,m\}$ and any 
			$j\in\{1,\dots,m\}$ with $j\ne p$, and $a=\pm u_p/\|u_p\|$.
	\end{enumerate}
\end{corollary}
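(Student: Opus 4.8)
The plan is to start from the characterization of $K$-invariant hyperplanes through $0$ furnished by Lemma \ref{foo}, namely the system $\alpha^i\beta^j\le\delta_{ij}$ where $a=\sum_i\alpha^ie_i=\sum_j\beta^ju_j$ and $\alpha^i=\lng a,u_i\rng$, $\beta^j=\lng a,e_j\rng$. The key structural observation is that the off-diagonal inequalities $\alpha^i\beta^j\le0$ for all $i\ne j$ force a severe sign pattern on the pair of coordinate vectors $(\alpha^1,\dots,\alpha^m)$ and $(\beta^1,\dots,\beta^m)$. First I would argue that among the indices $i$ with $\alpha^i\ne0$ and the indices $j$ with $\beta^j\ne0$, essentially at most two indices can be ``active''. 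Concretely: if $\alpha^p>0$ and $\alpha^q>0$ for $p\ne q$, then for every $j\notin\{p,q\}$ the inequality $\alpha^p\beta^j\le0$ and $\alpha^q\beta^j\le0$ are consistent, but the crucial point is to play the two representations against each other. I would examine the supports $S_\alpha=\{i:\alpha^i\ne0\}$ and $S_\beta=\{j:\beta^j\ne0\}$ and show, using that $a\ne0$ and the inequalities, that $|S_\alpha|\le2$ and $|S_\beta|\le2$; the borderline degenerate cases where one coordinate vanishes are exactly what will produce alternatives (ii) and (iii).

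Next I would split into cases according to the size of the supports. If $|S_\alpha|=2$, say $S_\alpha=\{p,q\}$ with $\alpha^p$ and $\alpha^q$ of opposite signs (the same-sign case being excluded by a further use of the inequalities together with $\beta^j=\lng a,e_j\rng$ and the positivity of $\lng u_i,u_j\rng$-type quantities that the $\delta_{ij}$-constraint encodes on the diagonal), then $a\in\cone\{e_p,-e_q\}$ up to relabelling; simultaneously the $\beta$-support argument forces $a\in\cone\{u_p,-u_q\}$ for the same $p,q$, giving situation (i). If instead $|S_\alpha|=1$, say $\alpha^i=0$ for all $i\ne p$, then $\lng a,u_i\rng=0$ for $i\ne p$, which pins $a$ to the line through $e_p$ (since the $u_i$ span a hyperplane complementary to $e_p$), so $a=\pm e_p/\|e_p\|$; feeding this back into $\alpha^i\beta^j\le\delta_{ij}$ and computing $\beta^j=\lng a,e_j\rng=\pm\lng e_p,e_j\rng/\|e_p\|$ turns the remaining inequalities into $\lng e_p,e_j\rng\le0$ for $j\ne p$, which is situation (ii). The symmetric case $|S_\beta|=1$ gives situation (iii) by swapping the roles of $K$ and $K^*$ (i.e. of the $e$'s and $u$'s), which is legitimate since $K$-invariance is symmetric in $K,K^*$. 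Conversely, for each of (i), (ii), (iii) I would simply verify directly that the displayed $a$ satisfies \eqref{ebio2}: in case (i) writing $a=\lambda e_p-\mu e_q$ makes all products $\alpha^i\beta^j$ vanish except possibly the ones indexed within $\{p,q\}$, which are handled by $\|a\|=1$ and the biorthogonality \eqref{biort}; cases (ii) and (iii) are immediate from the sign hypotheses.

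The main obstacle I anticipate is the bookkeeping in the converse-free direction that rules out the same-sign two-element support and correctly couples the $\alpha$-support with the $\beta$-support — that is, showing that a two-point support for $\alpha$ on $\{p,q\}$ forces the $\beta$-support to lie in the \emph{same} pair $\{p,q\}$ rather than in some unrelated pair, and with matching sign pattern so that the two cone memberships in (i) refer to the same indices. This is where one genuinely has to use both the off-diagonal inequalities $\alpha^i\beta^j\le0$ and the diagonal ones $\alpha^i\beta^i\le1$ in tandem, together with the duality pairing $\lng e_i,u_j\rng=\delta_{ij}$ to convert statements about $\alpha$ into statements about $\beta$ and vice versa. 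Everything else — the reduction via Lemma \ref{foo}, the linear-algebra facts about the supports, and the three explicit verifications in the converse direction — is routine.
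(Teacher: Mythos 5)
Your reduction to Lemma \ref{foo} and the three sufficiency verifications are fine and coincide with the paper's. The necessity direction, however, rests on a structural claim that is false: it is not true that both supports $S_\alpha=\{i:\alpha^i\ne0\}$ and $S_\beta=\{j:\beta^j\ne0\}$ have at most two elements. Take $m=3$ and a basis $e_1,e_2,e_3$ with $\lng e_1,e_2\rng<0$ and $\lng e_1,e_3\rng<0$, and set $a=e_1/\|e_1\|$. Then $\alpha^1=1/\|e_1\|$, $\alpha^2=\alpha^3=0$, while $\beta^1=\|e_1\|>0$, $\beta^2=\lng e_1,e_2\rng/\|e_1\|<0$, $\beta^3=\lng e_1,e_3\rng/\|e_1\|<0$, so $|S_\beta|=3$; nevertheless all inequalities $\alpha^i\beta^j\le\delta_{ij}$ of \eqref{ebio2} hold (the only nontrivial ones are $\alpha^1\beta^j\le\delta_{1j}$, and $\alpha^1\beta^1=1$ because $\|a\|^2=\sum_i\alpha^i\beta^i=1$). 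This $a$ is precisely an instance of situation (ii), so the hyperplane is $K$-invariant while violating your bound. The symmetric example $a=u_1/\|u_1\|$ with $\lng u_1,u_j\rng\le0$ for $j\ne1$ gives $|S_\beta|=1$ but $|S_\alpha|$ as large as $m$, so the bound fails on the $\alpha$ side as well. Since the exhaustiveness of your trichotomy ($|S_\alpha|=2$, $|S_\alpha|=1$, $|S_\beta|=1$) is justified only by this false bound, the proof does not close as written: the cases do eventually turn out to be exhaustive, but that is a consequence of the corollary itself, not something you have established.

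The correct statement is asymmetric --- the two supports cannot \emph{both} be large --- and the paper's necessity argument is organized entirely around $S_\beta$. If more than two $\beta^j$ are nonzero, two of them share a sign, say both positive; then every off-diagonal inequality $\alpha^i\beta^j\le0$ forces $\alpha^i\le0$ for all $i$, so $a\in-K$; since $a\ne0$ some $\beta^p$ must be negative (otherwise $a\in K^*\cap(-K)=\{0\}$), which forces $\alpha^i=0$ for all $i\ne p$ and lands you in situation (ii) with $|S_\alpha|=1$ but $S_\beta$ possibly large. Only when $|S_\beta|\le2$ does one reach situations (i) (two $\beta$'s of opposite sign) and (iii) (a single nonzero $\beta$). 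Your handling of the genuinely two-point case is essentially correct --- opposite signs of $\alpha^p,\alpha^q$ do force $S_\beta\subset\{p,q\}$ with matching signs via the off-diagonal inequalities, and the same-sign two-point case is excluded by $a\in K\cap(-K^*)=\{0\}$ --- but to complete the proof you must replace the conjunctive support bound by the disjunctive dichotomy above.
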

\begin{proof}
	$\,$
	\medskip

	Let us see first that the conditions are sufficient.
	
	By Lemma \ref{foo}, $H$ is a $K$-invariant hyperplane through 
	$0$ with unit normal vector $a$ if and only if
	\begin{equation}\label{bineg}
		\alpha^i\beta^j\le\delta_{ij},
	\end{equation} 
	for any $i,j\in\{1,\dots,m\}$, where 
	\begin{equation}\label{enormal}
		a=\alpha^1e_1+\dots+\alpha^me_m=\beta^1u_1+\dots+\beta^mu_m. 
	\end{equation}

	Assume that item (i) holds. Then, $a=\alpha^pe_p+\alpha^qe_q=\beta^pu_p+\beta^qu_q$,
	where $\alpha^p\ge0$, $\alpha^q\le0$, $\beta^p\ge0$ and $\beta^q\le0$. Hence, we have
	$\alpha^p\beta^q=\alpha^q\beta^p\le0$. On the other hand, 
	$1=\|a\|^2=\lng\alpha^pe_p+\alpha^qe_q,\beta^pu_p+\beta^qu_q\rng=\alpha^p\beta^p+
	\alpha^q\beta^q$, $\alpha^p\beta^p\ge0$ and $\alpha^q\beta^q\ge0$ imply 
	$\alpha^p\beta^p\le1$ and $\alpha^q\beta^q\le1$. Hence, conditions \eqref{bineg} are
	satisfied.
	\medskip

	Assume that item (ii) holds. Then, $\alpha^p=\pm1/\|e_p\|$ and all other 
	$\alpha^i$s are $0$. We also have 
	$\beta^j=\lng a,e_j\rng=\pm(1/\|e_p\|)\lng e_p,e_j\rng$. Thus, 
	$\alpha^i\beta^j=0\le\delta_{ij}$ if $i\ne p$ and $\alpha^p\beta^j=
	(1/\|e_p\|)^2\lng e_p,e_j\rng\le\delta_{jp}$. Hence, conditions \eqref{bineg} are
	satisfied.
	\medskip

	Assume that item (iii) holds. An argument similar to the one in the previous paragraph 
	shows that conditions \eqref{bineg} are satisfied.
	\medskip
	
	To see that the conditions are necessary, first suppose that more than 
	two $\beta^j$s are nonzero. Then, there exists two 
	$\beta^j$s which have the same sign. Without loss of generality we can suppose that 
	they are both positive. Then, from inequalities \eqref{bineg}, it follows that all 
	$\alpha^i$s are nonpositive. Thus, $a\in -K$. If all $\beta^j$s are nonnegative, then 
	$a\in K^*$. Hence, $a\in K^*\cap(-K)=\{0\}$, which is a contradiction. Therefore, there
	exists a $p\in\{1,\dots,m\}$ such that $\beta^p<0$. But then, by inequalities 
	\eqref{bineg}, all $\alpha^i$s with $i\ne p$ are nonnegative. Hence, all $\alpha^i$s,
	with $i\ne p$ are zero. Hence, from equation \eqref{enormal} we get $a=\alpha^pe_p$ and
	therefore $a=\pm e_p/\|e_p\|$. The same equation implies
	\begin{equation}\label{enormal2}
		\alpha^pe_p=\beta^1u_1+\dots+\beta^mu_m. 
	\end{equation}
	Take any $i\in\{1,\dots,m\}$ with $i\ne p$. 
	Since $a=\alpha^pe_p$ and $\alpha^k\leq 0$ for all $k\in\{1,\dots,m\}$, it follows 
	that $\alpha^p<0$. Hence, inequality \eqref{bineg} implies $\beta^i\ge0$. 
	Thus, by multiplying equation \eqref{enormal2} scalarly by $e_i$ and by using
	relations \eqref{biort}, we obtain
	\begin{equation}\label{enormal3}
		\alpha^p\lng e_p,e_i\rng=\beta^i\ge0. 
	\end{equation}
	Now, if we divide \eqref{enormal3} by $\alpha^p<0$, then we obtain 
	$\lng e_p,e_i\rng\le0$. The latter inequality together with $a=\pm e_p/\|e_p\|$ implies 
	that item (ii) holds.
	
	Next, suppose that at most two $\beta^j$s are nonzero. 	
	\begin{enumerate}
		\item[]
			\begin{enumerate} 
				\item[Case 1.] Suppose that there exists $p,q\in\{1,\dots,m\}$
					with $p\ne q$ such that $\beta^p\beta^q>0$. Then, 
					without loss of generality we can suppose that 
					$\beta^p>0$ and $\beta^q>0$. Since all $\beta^j$'s are
					nonnegative and there exists two $\beta^j$'s which are
					positive, as above, we can show that this would imply 
					that $a=0$ which is a contradiction. Thus, this case 
					cannot hold.
				\item[Case 2.] Suppose that there exists $p,q\in\{1,\dots,m\}$ 
					with $p\ne q$ such that $\beta^p>0$ and $\beta^q<0$. 
					Then, by using inequalities \eqref{bineg}, we must have
					$\alpha^i=0$, for all 
					$i\in\{1,\dots,m\}\setminus\{p,q\}$.
					By using again inequalities \eqref{bineg}, we obtain
					$\alpha^p\ge0$ and $\alpha^q\le0$. It follows that
					$a=\alpha^pe_p+\alpha^qe_q=\beta^pu_p+\beta^qu_q$. 
					Thus, $\cone\{e_p,-e_q\}\cap\cone\{u_p,-u_q\}\ne\{0\}$ 
					and  $a\in\cone\{e_p,-e_q\}\cap\cone\{u_p,-u_q\}$. So, 
					in this case item (i) holds. 
				\item[Case 3.] Suppose that only one of the $\beta^j$ 
					is nonzero, that is $a=\beta^pu_p$. Since $\|a\|=1$,
					it follows that $a=\pm u_p/\|u_p\|$. An argument 
					similar to the one following relation 
					\eqref{enormal2} shows that 
					$\lng u_p,u_j\rng\le0$, for any 
					$j\in\{1,\dots,m\}$, $j\ne p$. So, in this case item 
					(iii) holds.
			\end{enumerate}
	\end{enumerate}
\end{proof}

Now, combining the results in Lemma \ref{latprop}, Corollary \ref{POLYH} and Lemma \ref{foo},
we have the following result:

\begin{corollary}\label{HALOINV} 
Let $K\subset\R^m$ be a simplicial cone given by (\ref{youdine}) such that its dual $K^*$
	is of form (\ref{youdinedual}) with the vectors $u_1,\dots,u_m$ satisfying (\ref{biort}).
	Let $C$ be a closed convex set with nonempty interior.
	Then, the following assertions are equivalent:
	 \begin{enumerate}
	\item [\emph{(i)}] $C$ is a $K$-invariant set, i. e., it is
	invariant with respect to the operations $\sa$, $\su$, $\sa_*,$ and $\su_*$ ;
	\item [\emph{(ii)}] The projection $P_C$ is $K$-isotone and $K^*$-isotone;
	\item [\emph{(iii)}] The set $C$ can be represented by
	\begin{equation}\label{redu2}
	C=\cap_{i\in \N} H_-(a_i,b_i),
	\end{equation}
	where each hyperplane $H(a_i,b_i),\;i\in \N$ is tangent to $C$ and is $K$-invariant
	and $a_i$ are unit normals with 
	\begin{equation}\label{haloeset}
	a_i=\alpha_i^1e_1+\dots+\alpha_i^me_m=\beta_i^1u_1+\dots+
		\beta_i^mu_m,\; \alpha_i^k\beta_i^l\leq \delta_{kl},\;i\in \N;
		\end{equation}
	\item [\emph{(iv)}] $C$ can be represented by (\ref{redu2}),
	where each hyperplane $H(a_i,b_i),\;i\in \N$ with the unit normals $a_i, \;i\in \N$
	satisfying the conditions (\ref{haloeset}), is tangent to $C$ and is a 
	$K$-isotone projection and a $K^*$-isotone
	projection set.
	\end{enumerate}
	\end{corollary}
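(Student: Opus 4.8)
The plan is to assemble Corollary \ref{HALOINV} essentially by specializing the already-proved Corollary \ref{POLYH} to the simplicial-cone setting and feeding in the explicit normal-vector description from Lemma \ref{foo}. The logical backbone is identical: Corollary \ref{POLYH} already gives, for an arbitrary nonzero closed convex cone $K$ and a closed convex set $C$ with nonempty interior, the equivalence of (i) $K$-invariance, (ii) $K$-isotonicity together with $K^*$-isotonicity of $P_C$, (iii) a tangential halfspace representation $C=\cap_{i\in\N}H_-(u_i,a_i)$ with each $H(u_i,a_i)$ tangent and $K$-invariant, and (iv) the same representation with each tangent hyperplane being both a $K$-isotone and a $K^*$-isotone projection set. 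Since a simplicial cone is in particular a nonzero closed convex cone, the equivalences (i)$\Leftrightarrow$(ii)$\Leftrightarrow$(iii)$\Leftrightarrow$(iv) of Corollary \ref{HALOINV} are obtained verbatim from Corollary \ref{POLYH} once we know that, in the simplicial case, "the hyperplane $H(a_i,b_i)$ through a point of $\bdr C$ with unit normal $a_i$ is $K$-invariant" is the same as "$a_i=\alpha_i^1e_1+\dots+\alpha_i^me_m=\beta_i^1u_1+\dots+\beta_i^mu_m$ with $\alpha_i^k\beta_i^l\le\delta_{kl}$."

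That last translation is exactly the content of Lemma \ref{foo}: there it is proved that a hyperplane $H$ through $0$ with unit normal $a$ is $K$-invariant if and only if $\langle a,e_k\rangle\langle a,u_l\rangle\le\delta_{kl}$ for all $k,l$, equivalently $\alpha^k\beta^l\le\delta_{kl}$ with $\alpha^k=\langle a,u_k\rangle$, $\beta^l=\langle a,e_l\rangle$. The only gap to fill is that Lemma \ref{foo} is stated for hyperplanes through the origin, whereas the representation \eqref{redu2} involves hyperplanes $H(a_i,b_i)$ tangent to $C$ at boundary points which need not be $0$. This is handled by translation invariance: by item (ii) of Lemma \ref{latprop}, $K$-invariance of a set is preserved under translation, so $H(a_i,b_i)$ is $K$-invariant if and only if the parallel hyperplane through $0$ with the same unit normal $a_i$ is $K$-invariant, and Lemma \ref{foo} applies to the latter. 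This is the step I expect to require the most care in the writeup, though it is routine — one just needs to note that the condition \eqref{haloeset} depends only on the direction $a_i$ and not on the offset $b_i$.

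Concretely, I would organize the proof as follows. First invoke Corollary \ref{POLYH} to record the equivalences among the four assertions with "$K$-invariant" and "$K$-isotone/$K^*$-isotone projection" hyperplanes appearing in (iii) and (iv); since $K$ here is simplicial, hence a nonzero closed convex cone, this applies directly. Second, fix any tangent hyperplane $H(a_i,b_i)$ with unit normal $a_i$ appearing in the representation \eqref{redu2}; by Lemma \ref{latprop}(ii) its $K$-invariance is equivalent to that of its translate through $0$, and by Lemma \ref{foo} the latter is equivalent to the system \eqref{haloeset}. Third, note that by Theorem \ref{ISOINV} (already used to obtain (iii)$\Leftrightarrow$(iv) in Corollary \ref{POLYH}) a hyperplane is $K$-invariant iff it is a $K$-isotone projection set, and likewise $K^*$-invariant iff $K^*$-isotone; combined with Lemma \ref{linv} this shows the hyperplanes satisfying \eqref{haloeset} are exactly those that are simultaneously $K$-isotone and $K^*$-isotone projection sets, which closes the loop and yields (iv) in the stated form. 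Hence all four assertions are equivalent, completing the proof.

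I do not anticipate any genuine obstacle: the corollary is a packaging result, and every nontrivial ingredient — the four-way equivalence, the explicit inequality characterization of invariant hyperplanes, translation invariance of $K$-invariance — is already in hand. The one subtlety worth stating explicitly is the passage from hyperplanes through $0$ (as in Lemma \ref{foo}) to arbitrary tangent hyperplanes via Lemma \ref{latprop}(ii), and the observation that conditions \eqref{haloeset} constrain only the normal direction.
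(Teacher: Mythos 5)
Your proposal is correct and follows exactly the paper's own route: the paper derives Corollary \ref{HALOINV} precisely by combining Corollary \ref{POLYH} with Lemma \ref{foo}, using Lemma \ref{latprop} to reduce tangent hyperplanes to hyperplanes through the origin. The one point you flag as needing care — that the condition \eqref{haloeset} depends only on the normal direction and that translation invariance (Lemma \ref{latprop}(ii)) bridges the gap — is indeed the only detail the paper leaves implicit, and you handle it correctly.
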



	In the particular case of $K=\R^m_+$, where $\R^m_+$ is the positve
	orthant of a Cartesian system, taking into account that
	$\sa=\sa_*=\wedge,\;\su=\su_*=\vee,$ the Corollary \ref{HALOINV},
	(via Corollary \ref{orthalo} in place of Corollary \ref{cfoo})
	takes the form:

	\begin{corollary}

	Let $C$ be a closed convex set with nonempty interior of the coordinatewise ordered
	Euclidean space $\R^m$. Then, the following assertions are equivalent:
	\begin{enumerate}
	\item [\emph{(i)}] The set $C$ is a sublattice;
	\item [\emph{(ii)}]The projection $P_C$ is isotone;
	\item [\emph{(iii)}]\begin{equation*}
	C=\cap_{i\in\N} H_-(a_i,b_i),
	\end{equation*}
	where each hyperplane $H(a_i,b_i)$ is tangent to $C$ and the normals $a_i$ 
	are nonzero vectors $a_i=(a_i^1,...,a_i^m)$ with the properties 
		$a_i^ka_i^l\leq 0$
		whenever $k\not= l,\;\;i\in \N.$

\end{enumerate}
\end{corollary}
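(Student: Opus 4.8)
The plan is to obtain this statement as the specialization of Corollary \ref{HALOINV} to the case $K=\R^m_+$, the nonnegative orthant of an orthonormal Cartesian system. The first step is to set up the translation dictionary. By the elementary computation recorded in Section \ref{Metric}, for $K=\R^m_+$ one has $\sa=\sa_*=\wedge$ and $\su=\su_*=\vee$, so being \emph{$K$-invariant} is literally the same as being a sublattice of the coordinatewise ordered $\R^m$; this reformulates assertion (i) of Corollary \ref{HALOINV} as assertion (i) here. Moreover $\R^m_+$ is self-dual, hence $\le$ and $\le_*$ both coincide with the coordinatewise order, and \emph{$K$-isotonicity} of $P_C$ is identical with \emph{$K^*$-isotonicity}, each meaning simply that $P_C$ is isotone for the coordinatewise order; this turns assertion (ii) of Corollary \ref{HALOINV} into assertion (ii) here.

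The second step is to transcribe the hyperplane condition. Taking the orthonormal basis $e_1,\dots,e_m$ as the generating vectors in (\ref{youdine}), the biorthogonality relations (\ref{biort}) force $u_i=e_i$ for every $i$, so in the decomposition $a=\alpha^1e_1+\dots+\alpha^me_m=\beta^1u_1+\dots+\beta^mu_m$ of a unit normal we get $\alpha^i=\beta^i=a^i$. Consequently the inequalities (\ref{haloeset}), namely $\alpha_i^k\beta_i^l\le\delta_{kl}$, reduce to $a_i^ka_i^l\le\delta_{kl}$. Since $\|a_i\|=1$ forces $|a_i^k|\le1$, the diagonal inequalities $a_i^ka_i^k\le1$ hold automatically, and the content of (\ref{haloeset}) collapses exactly to the off-diagonal conditions $a_i^ka_i^l\le0$ for $k\ne l$. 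This is precisely the statement of Corollary \ref{orthalo}, which may therefore be quoted directly in place of the general Lemma \ref{foo}/Corollary \ref{cfoo} machinery. Feeding this reduced normal condition into assertion (iii) of Corollary \ref{HALOINV} yields assertion (iii) here.

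Putting these two steps together with Corollary \ref{HALOINV} gives the equivalence of (i), (ii) and (iii). The one point that deserves explicit care — and the only thing I would call an obstacle, mild as it is — is bookkeeping: because $\R^m_+$ is self-dual, the separate $K$- and $K^*$-clauses of Corollary \ref{HALOINV} merge, so its conditions (ii) and (iv) cease to be distinct (a hyperplane being an isotone projection set is, by Theorem \ref{ISOINV}, the same as its being $K$-invariant), and the four-item list of Corollary \ref{HALOINV} legitimately contracts to the three items stated here. Once this is noted, the proof is a routine substitution with no further computation.
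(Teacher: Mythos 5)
Your proposal is correct and follows essentially the same route as the paper: the corollary is obtained by specializing Corollary \ref{HALOINV} to $K=\R^m_+$, using the identities $\sa=\sa_*=\wedge$, $\su=\su_*=\vee$, self-duality to merge the $K$- and $K^*$-clauses, and Corollary \ref{orthalo} to reduce the normal condition \eqref{haloeset} to $a_i^ka_i^l\le0$ for $k\ne l$. The bookkeeping points you flag (collapse of the four-item list to three, automatic satisfaction of the diagonal inequalities) are exactly the ones the paper relies on implicitly.
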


This corollary is exactly Corollary 4 in \cite{NemethNemeth2012a}.

\begin{example}
	$\,$

	\begin{enumerate}
		\item Let $\oplus$ denote orthogonal direct sum of subspaces, $m=2k$ and 
			$\R^m=V_1\oplus\dots\oplus V_k$, where $V_1,\dots,V_k$ are pairwise 
			orthogonal two-dimensional subspaces of $\R^m$. Let 
			$\{e_{2i-1},e_{2i}\}$ be a basis of the subspace $V_i$, for any 
			$i\in\{1,\dots,k\}$. Then, $K:=\cone\{e_1,\dots,e_m\}$ is a simplicial 
			cone and let $K^*=\{u_1,\dots,u_m\}$ its dual cone. By using the 
			biorthogonality of the vectors $e_i$ and $u_j$, $i,j\in\{1,\dots,m\}$,
			we obtain that 
			$\cone\{e_{2i-1},-e_{2i}\}\cap\cone\{u_{2i-1},-u_{2i}\}=
			\cone\{u_{2i-1},-u_{2i}\}\neq\{0\}$ if 
			$\lng e_{2i-1},e_{2i}\rng\ge0$ and 
			$\cone\{e_{2i-1},-e_{2i}\}\cap\cone\{u_{2i-1},-u_{2i}\}=
			\cone\{e_{2i-1},-e_{2i}\}\neq\{0\}$ if 
			$\lng e_{2i-1},e_{2i}\rng\le0$. Hence, by using item (i) of Corollary
			\ref{cfoo}, any hyperplane $H$ through $0$ with normal unit vector in 
			$\cone\{e_{2i-1},-e_{2i}\}\cap\cone\{u_{2i-1},-u_{2i}\}\neq\{0\}$ is 
			$K$-invariant.
		\item Let $K$ be an isotone projection cone. Then, by item (ii) of Corollary
			\ref{cfoo}, the $n-1$ dimensional hyperfaces of $K$ are $K$-invariant.
	\end{enumerate}
\end{example}

%

\section{Comments and open questions}\label{concl}

Motivated by isotone type iterative methods for variational inequalities in this paper we 
considered the following very general question: Which are the closed convex 
sets which possess a projection onto them which is isotone with respect to an order relation
defined by a given cone? We showed that they are exactly the sets which are invariant with
respect to some extended lattice operations defined by the projection onto the cone and 
presented some examples for self-dual and simplicial cones. A related at least as interesting 
and still open question is: Which are the closed convex sets for which there exist a cone such 
that the projection onto them are isotone with respect to order relation defined by the cone? 
Both of the above questions can also be formulated for a general pair of cones and their 
corresponding orderings. These extended questions are also open.

We remark that several of our results do not use explicitly that the ambient space is
Euclidean, and they hold in Hilbert spaces too.%



\bibliographystyle{habbrv}
\bibliography{halosz}

\end{document}